\newcommand{\comments}[1]{}
\newtheorem{assumption}{Assumption}
\newcommand{\beq}{\begin{equation}}
\newcommand{\eeq}{\end{equation}}
\newcommand{\beqa}{\begin{eqnarray}}
\newcommand{\eeqa}{\end{eqnarray}}
\newcommand{\beqas}{\begin{eqnarray*}}
\newcommand{\eeqas}{\end{eqnarray*}}
\newcommand{\bi}{\begin{itemize}}
\newcommand{\ei}{\end{itemize}}
\newcommand{\ba}{\begin{array}}
\newcommand{\ea}{\end{array}}
\newcommand{\nn}{\nonumber}
\def\eqnok#1{(\ref{#1})}
\def\vgap{\vspace*{.1in}}
\newcommand{\bbe}{\mathbb{E}}
\def\Prob{{\hbox{\rm Prob}}}
\newcommand{\bbr}{\mathbb{R}}
\def\w{\omega}
\def\SO{{\cal SO}}
\title{
Conditional gradient type methods for \\
 composite nonlinear and stochastic optimization
\thanks{This work was done while the author was working at the Institute for Research in Fundamental Sciences (P.O.Box: 19395-5746, Tehran, Iran) and supported by a grant from the School of Mathematics.}
}
\date{January, 2016 (Revised: August 2017, December 2017)}
\author{Saeed Ghadimi\thanks{sghadimi@princeton.edu.}
}
\begin{document}
\maketitle
% ABSTRACT-------------------------
\begin{abstract}
In this paper, we present a conditional gradient type (CGT) method for solving a class of composite optimization problems where the objective function consists of a (weakly) smooth term and a (strongly) convex regularization term. While including a strongly convex term in the subproblems of the classical conditional gradient (CG) method improves its rate of convergence, it does not cost per iteration as much as general proximal type algorithms. More specifically, we present a unified analysis for the CGT method in the sense that it achieves the best known rate of convergence when the weakly smooth term is nonconvex and possesses (nearly) optimal complexity if it turns out to be convex. While implementation of the CGT method requires explicitly estimating problem parameters like the level of smoothness of the first term in the objective function, we also present a few variants of this method which relax such estimation. Unlike general proximal type parameter free methods, these variants of the CGT method do not require any additional effort for computing (sub)gradients of the objective function and/or solving extra subproblems at each iteration. We then generalize these methods under stochastic setting and present a few new complexity results. To the best of our knowledge, this is the first time that such complexity results are presented for solving stochastic weakly smooth nonconvex and (strongly) convex optimization problems.

\end{abstract}
%KEYWORDS--------------------------

\vgap

\noindent {\bf keywords}
iteration complexity, nonconvex optimization, strongly convex optimization, conditional gradient type methods,
unified methods, weakly smooth functions,

%\noindent {\bf AMS 2000 subject classification:} 90C25, 90C06, 90C22, 49M37

%\pagestyle{myheadings}
\thispagestyle{plain}
%\markboth{S. GHADIMI}{UNIFIED CONDITIONAL GRADIENT TYPE METHODS FOR COMPOSITE NONLINEAR AND STOCHASTIC OPTIMIZATION}

%%%%%%%%%%%%%%%%%%%%%%%%%%%%%%%%%%%%%%%%%%%%
\setcounter{equation}{0}
\section{Introduction}
\label{sec_intro}
%%%%%%%%%%%%%%%%%%%%%%%%%%%%%%%%%%%%%%%%%%%%
In this paper, we study a class of composite nonlinear programming problems given by
\beq \label{NLP}
\Psi^* := \min\limits_{x \in X} \{\Psi(x) := f(x) + h(x)\},
\eeq
where  $X \subseteq \bbr^n$ is a closed convex set, $f:X \to \bbr$ is continuously differentiable but possibly nonconvex {\color{black}such that $\Psi^*$ is finite}, and $h$ is a possibly non-differentiable (strongly) convex function with parameter $\mu\ge0$ with known structure i.e.,
\beq \label{h_strng}
h(\alpha x+(1-\alpha)y) \le \alpha h(x) +(1-\alpha)h(y)-\frac{\alpha(1-\alpha)\mu}{2} \|x-y\|^2
\eeq
for all $x,y \in X$ and $\alpha \in [0,1]$. {\color{black}Note that we allow $\mu=0$ to simply include the case when $h$ is only convex}. Moreover, we assume that $f$ has H\"{o}lder continuous gradient on $X$ i.e.,
there exists $L_\nu>0$ for any $\nu \in (0,1]$ such that
\beq \label{f_holder1}
\|f'(y) - f'(x)\|_* \le L_\nu \|y-x\|^\nu \quad \forall x, y \in X,
\eeq
where $f'(x)$ is the gradient of $f$ at $x$ and $\|\cdot\|_*$ denotes the dual norm.
It can be easily verified that the above inequality implies that
\beq \label{f_holder2}
|f(y) - f(x) - \langle f'(x), y - x \rangle | \le \frac{L_\nu}{1+\nu} \|y - x\|^{1+\nu}
\qquad\forall x, y \in X.
\eeq

The assumption in \eqnok{f_holder1} covers a wide range class of functions including smooth ($\nu = 1$) and
weakly smooth ones ($\nu \in (0,1)$). Different applications arise in the form of problem \eqnok{NLP}. In several machine learning problems, the objective function is given as summation of a loss function and a (strongly) convex regularization term where the loss function can be convex (see e.g., \cite{GraYos05,ZouHas05} ) or nonconvex (see e.g., \cite{MasBaxBarFre99,ChaSinKee08-1}). Signal processing problems using a (strongly) convex regularization term to avoid data overfitting also satisfy in the above setting (see e.g., \cite{JeJoHaJe12}).

Problem \eqnok{NLP} has been well-studied under specific assumptions. Nesterov~\cite{Nest13-1} showed that a variant of his well-known accelerated gradient (AG) method, originally proposed for smooth unconstrained problems, achieves the optimal iteration complexities ${\cal O}(1/\sqrt{\epsilon})$ and ${\cal O}(\ln 1/\epsilon)$ for finding an $\epsilon$-optimal solution of problem \eqnok{NLP} i.e., a solution $\bar x \in X$ s.t. $\Psi(\bar x)-\Psi(x^*) \le \epsilon$, when $f$ is a smooth convex function, $\mu=0$, and $\mu>0$, respectively. Lan~\cite{Lan13-1} presented bundle-level type methods achieving the optimal complexity ${\cal O}(1/\epsilon^\frac{2}{1+3\nu})$ for solving weakly smooth convex problems. Nemirovski and Nesterov~\cite{NemNes85-1} also proposed an optimal proximal gradient type method which possesses the optimal complexity bound ${\cal O}(1/\epsilon^\frac{1-\nu}{1+3\nu})$ for solving weakly smooth strongly convex problems. This bound is also recently obtained in \cite{DeGlNe13,Ito15}. When $f$ is smooth and possibly nonconvex and $h$ is only convex, the (projected) gradient type methods~\cite{CarGouToi10-1,GhaLanZhang14,Nest04} have been proposed achieving the best known complexity ${\cal O}(1/\epsilon)$ to find an $\epsilon$-stationary point of problem \eqnok{NLP} i.e., having $\|g_{_{X,k}}\|^2 \le \epsilon$ for at least one iteration $k$ (see \eqnok{grad_map} for definition of the so-called gradient mapping $g_{_{X,k}}$ at the $k$-th iteration). Recently, Nesterov's AG method has been also generalized for nonconvex problems achieving the aforementioned complexity bound~\cite{GhaLan15,GhaLanZha15}. It seems that for general first-order methods, strong convexity of $h$ does not improve the complexity results when $f$ is possibly nonconvex. Ghadimi et al.~\cite{GhaLanZha15} also generalized the AG method and bundle-level type methods which possess the complexity bound ${\cal O}(1/\epsilon^\frac{1+\nu}{2\nu})$ for finding $\epsilon$-stationary point of weakly smooth nonconvex problems. These methods are also uniformly optimal in the sense that they automatically achieve the optimal complexity bound if the problem turns out to be convex. Such uniform methods have been developed before only for convex programming~\cite{Lan13-1,Nest14}.

All of the above-mentioned methods except the bundle-level type methods, generally known as proximal type methods, require solving subproblems in the form of
\beq \label{subproblem2}
\min_{u \in X} \{\langle f'(x),u \rangle+h(u)+\frac{1}{\gamma} V(u,x)\}
\eeq
at each iteration, where $\gamma$ is a stepsize and $V(u,x)$ is a strongly convex function usually referred to Bregman distance function given by $V(u,x)=\w(u)-\w(x) -\langle w'(x),u-x\rangle$ for a strongly convex so-called prox-function $\w$.

Different class of methods include subproblems in which only linear approximation of the objective function is minimized over a bounded feasible set. The classical conditional gradient (CG) method seems to be the eldest one first proposed by Frank and Wolfe in 1956 \cite{FrankWolfe56-1} and later widely used in the literature due to its low iteration cost (see e.g., \cite{Dunn79,Dunn80,Jaggi13,GarberHazan13,HarJudNem12-1,LussTeb13-1,Lan13-2}). While this class of algorithms generally achieves the iteration complexity of ${\cal O}(1/\epsilon)$ for solving smooth strongly convex problems, some of them exhibit linear rate of convergence under specific conditions on the feasible set (see e.g., \cite{PshDan78,GuMar86,GarberHazan13,Lan13-2}). Recently, Nesterov~\cite{Nest15} showed that the CG method achieves the iteration complexity ${\cal O}(1/\epsilon^\frac{1}{\nu})$ for minimizing weakly smooth convex problems. In particular, he considered the class of problems given by \eqnok{NLP} and showed that by modifying the linear optimization in the subproblems of the CG method to
\beq \label{subproblem}
\min_{u \in X} \{\langle f'(x),u \rangle+h(u)\},
\eeq
we can obtain the complexity bound ${\cal O}(1/\epsilon^\frac{1}{2\nu})$ for solving problem \eqnok{NLP} when $f$ is weakly smooth and convex ($\Psi$ is strongly convex).
{\color{black}Jiang and Zhang~\cite{JiaZha14} also presented an algorithm involving subproblems in the form of \eqnok{subproblem} for solving problem \eqnok{NLP} when $f$ is possibly nonconvex and $h$ is convex. This method exhibits iteration complexity of ${\cal O}(1/\epsilon^\frac{1+\nu}{\nu})$ for finding at least one $k$ such that $g_k \le \epsilon$, where $g_k$, defined in \eqnok{def_gk}, is a different termination criterion for finding approximate stationary points (see our discussion after Algorithm~\ref{alg_CGT} about the relation between $g_k$ and the aforementioned gradient mapping $g_{_{X,k}}$).

Our goal in this paper is to make a bridge between the classical CG method and general first-order methods including proximal type ones.
Note that solving subproblem \eqnok{subproblem2} can be more difficult than the above one due to the choice of $\w$. For example, while subproblem \eqnok{subproblem} has a closed form solution for some choices of $h$, we may need an iterative algorithm to solve subproblem \eqnok{subproblem2} due to the existence of Bregman distance function. On the other hand, since variants of the classical CG method require only solving linear programming in each iteration, their iteration costs can be cheaper than solving subproblem \eqnok{subproblem} in general. It is also worth noting that while subproblems of the bundle-level type methods may consist of objective functions similar to that of \eqnok{subproblem}, their feasible sets at each iteration are changed and hence are more complicated than $X$ (see e.g., \cite{Lan13-1}). Indeed, iteration costs of these methods are computationally more expensive than the proximal type methods.

There are several choices of $h$ where \eqnok{subproblem} is solved more efficiently than \eqnok{subproblem2}. A few examples are as follows.
\begin{itemize}
\item [$\bullet$] $h = I_X$, where $I_X$ denotes the indicator function of the bounded set $X$.

\vgap

\item  [$\bullet$] $h(x) =\lambda \|x\|_1$ or $h(x) =\lambda \|x\|_\infty$ for some $\lambda>0$.
%In this case, when $X$ is a polyhedral, the subproblems can be solved even more easily.

\vgap

\item [$\bullet$] $h(x) = \sum_{i=1}^n x_i \log x_i$ which is strongly convex w.r.t $\|\cdot\|_1$ with parameter $1$ over the standard simplex.

\vgap

\item [$\bullet$] $h(x) = \frac{1}{2}\|x\|_p^2$ which is strongly convex w.r.t $\|\cdot\|_p$ with parameter $p-1$, for $p \in (1,2]$.

\vgap

\item [$\bullet$] $h(x)= \frac{1}{2}\|A\|_{S(p)}^2$, where $\|A\|_{S(p)}=\left(\overset{\min(n,m)}{\underset{i=1}{\sum}} x(A)_i^p\right)^\frac{1}{p}$ is the Schatten $l_p$ norm for a matrix $A$ with singular values $\{x(A)_i\}_{i=1}^{\min(n,m)}$. For any $p \in (1,2]$, $h$ is strongly convex w.r.t the Schatten $l_p$ norm with parameter $p-1$ (\cite{kashTeAm12}).
\end{itemize}

\vgap

It is worth noting that for the first two examples, subproblem \eqnok{subproblem} is reduced to a linear programming by possibly adding more variables and/or linear constraints and hence it is equivalent to the subproblems of the classical CG method. This type of subproblems can be solved very efficiently for special cases of the feasible set like polytopes. Also, for the rest of the examples where $h$ is strongly convex, subproblem \eqnok{subproblem} has a closed form solution over their corresponding feasible sets i.e., simplex for the entropy function and the whole space for both squared $l_p$ norm and the squared Schatten norm. For the latter, it is trivial that if $X$ is any subset of the space containing the solution of the unconstrained case, it also admits that closed form solution. Note also that unlike the classical setting of the CG method, we can relax the boundedness assumption of the feasible set when $h$ is strongly convex.

We should point out that when $h$ is strongly convex and the prox-function is set to $h$, difficulty of solving \eqnok{subproblem} and \eqnok{subproblem2} are comparable. However, the prox-function $\w$ is usually considered to be continuously differentiable which may not be true for $h$ in our setting. For examples, squared $l_p$ norms ($p \in (1,2))$ are not differentiable at points with at least one zero component. Moreover, except for very common choices of strongly convex function $h$ like $\|\cdot\|_2^2$, there seems no convergence analysis for this kind of algorithms when applied to (stochastic) composite weakly smooth nonconvex problem. It is also worth noting the stepsize $\gamma$ in \eqnok{subproblem2} usually depends on the problem parameters. On the other hand, since no stepsize is involved in \eqnok{subproblem}, we can design a parameter free algorithm which is computationally much cheaper per iteration, when $h$ is strongly convex, than those include subproblems in the form of \eqnok{subproblem2} regardless of choice of the prox-function (see Algorithm~\ref{alg_CGT_ls} and the discussion after that for more details).

While the iteration cost of the methods including subproblems in the form of \eqnok{subproblem} is comparable or even better than that of the  classical CG method for several examples of $h$ and $X$ (which some of them are mentioned above), it generally lies between that of having linear programming in the subproblems and general proximal type methods including subproblems in the form of \eqnok{subproblem2}.  As we will show, the same fact holds for the iteration complexity of these methods. {\color{black} However, methods including \eqnok{subproblem} can also achieve the iteration complexity of the general proximal type methods under specific assumptions on problem \eqnok{NLP} which consequently implies a better total computational complexity. For example, assume that $f$ is a smooth nonconvex function, $h$ is the squared $l_p$ norm, $X$ is set to $\bbr^n$, or the unit $l_p$ norm ball. Then, our proposed methods, as summarized in Table~\ref{comp}, achieve iteration complexity of ${\cal O}(1/\epsilon)$ to find an $\epsilon$-approximate stationary point of problem \eqnok{NLP} while their subproblems have closed form solutions. Hence, the total number of (sub)gradient computations is still bounded by ${\cal O}(1/\epsilon)$. On the other hand, a few proximal type algorithms like the (projected) gradient method achieve the same ${\cal O}(1/\epsilon)$ iteration complexity (\cite{GhaLanZha15}). However, when the prox-function in subproblem \eqnok{subproblem2} is set to a continuously differentiable strongly convex function like $l_2$ norm squared, then one needs an iterative algorithm to solve the subproblems. It can be easily verified that if the subproblems are solved within $\epsilon$ accuracy, then the iteration complexity of the (projected) gradient method does not change. Since the objective function of the subproblem in this example is nonsmooth and strongly convex, the (projected) subgradient method can solve it within  ${\cal O}(1/\epsilon)$ number of iterations (subgradient computations of $h$). Therefore, the total number of (sub)gradient computations for this method is bounded by ${\cal O}(1/\epsilon^2)$ which is worse than the above-mentioned ${\cal O}(1/\epsilon)$ complexity for our proposed methods.
}
%includng  \eqnok{subproblem} can be more difficult than the one for the CG method, it can be reduced to linear optimization for certain type of function $h$. For example, $h(x)=\|x\|_1$ is strongly convex with respect to $\|\cdot\|_1$?. Hence, \eqnok{subproblem} can be reduced to a linear optimization using the simplex method when the feasible $X$ is a polytope.

Our contribution in this paper consists of the following three aspects. First, we present a conditional gradient type (CGT) method for solving problem \eqnok{NLP} which only differs from the classical CG method in incorporating function $h$ into the subproblems as shown in \eqnok{subproblem}. %We show that this type of subproblems allows us to establish the convergence of the CGT method when $f$ is possibly nonconvex.
We show that the number of iterations performed by the CGT method to find an $\epsilon$-approximate stationary point of problem \eqnok{NLP}, when $g_k$ is used as the termination criterion,
%i.e., having $\|g_{_{X,k}}\|^2 \le \epsilon$ for at least one iteration $k$ (see \eqnok{grad_map} for definition of the so-called gradient mapping $g_{_{X,k}}$ at the $k$-th iteration),
is bounded by
\beq \label{nocvx_best}
{\cal O}\left\{\frac{L_\nu^\frac{1}{\nu}}{\epsilon^\frac{1+\nu}{\nu}}\right\}, \qquad \qquad \qquad
{\cal O}\left\{\left(\frac{L_\nu}{\epsilon}\right)^\frac{1}{\nu} \log\frac{1}{\epsilon} \right\},
\eeq
when $f$ is nonconvex or convex, respectively. In the former case, the first bound is similar to the one obtained in \cite{JiaZha14} for a variant of the CGT method with different stepsizes. In the latter, the above second bound also guarantees finding an $\epsilon$-optimal solution of \eqnok{NLP} and is in the same order, up to a logarithmic factor, of the one obtained in \cite{Nest15} with a different stepsize policy. However, it is still worse than the optimal complexity bound of ${\cal O}(\epsilon^{-\tfrac{2}{1+3\nu}})$ for the general first-order methods. Moreover, if $h$ is strongly convex, then by using a different stepsize policy, the above-mentioned bounds are, respectively, improved to
\beq \label{nocvx_best2}
{\cal O}\left\{\left(\frac{L_\nu^2}{(\mu^\nu \epsilon)^{1+\nu}}\right)^\frac{1}{2\nu}\right\}, \qquad \qquad \qquad
{\cal O}\left\{\left(\frac{L_\nu^2 }{\mu^{\nu(1+\nu)} \epsilon^{1-\nu}}\right)^\frac{1}{2\nu}\log\frac{1}{\epsilon} \right\}.
\eeq
The first bound, disregarding $\mu$, is the best-known iteration complexity achieved by the first-order methods when applied to (composite) weakly smooth nonconvex problems. Recently, Ghadimi et al. \cite{GhaLanZha15} obtained a similar bound for a class of uniformly optimal first-order methods. Note however, that while function $h$ in \eqnok{NLP} is assumed to be only convex in \cite{GhaLanZha15}, their algorithms include subproblems in the form of \eqnok{subproblem2} for the choice of $\w(x)=\frac{1}{2}\|x\|_2^2$ and are more complicated than the CGT method. On the other hand, when $f$ is convex, the second bound is worse than the optimal complexity bound of ${\cal O}(\epsilon^{\tfrac{\nu-1}{1+3\nu}})$ for first-order methods for minimizing the class of weakly smooth strongly convex functions (\cite{NemNes85-1,DeGlNe13,Ito15}). Note however, that if $f$ is smooth ($\nu=1$), it is reduced to the optimal complexity bound ${\cal O} (\log \tfrac{1}{\epsilon})$ for solving smooth strongly convex problems. It should be also mentioned that we have a unified analysis of the CGT method to obtain these bound in the sense that the same stepsize policy is used regardless of the convexity of $f$. However, knowing the strong convexity of $h$ helps us to get better results by modifying the stepsize policy.

Second, while obtaining the above-mentioned complexity bounds requires explicitly estimating problem parameters $L_\nu$, $\nu$, and $\mu$, we also present two variants of the CGT method which relax such estimations. In particular, when $\mu>0$, we equip the CGT method with a line search procedure which only uses the target accuracy as a parameter in its implementation. On the other hand, since there is no stepsize involved in the subproblem \eqnok{subproblem}, this line search does not require any additional effort for computing extra gradients of $f$ and/or solving more subproblems. In this sense, this variant of the CGT method is computationally much cheaper per iteration than the other parameter free algorithms like the ones in \cite{GhaLanZha15,DeGlNe13,Nest14} using subproblems in the form of \eqnok{subproblem2} for solving weakly smooth problems. It is worth noting that such a parameter free algorithm is much more desirable in a black-box optimization where only first-order information of $f$ in \eqnok{NLP} is given through an oracle and its convexity and level of smoothness may not be exactly known. This variant of the CGT method achieves the same complexity bounds in \eqnok{nocvx_best2}. Furthermore, we present another variant of the CGT method which does not require $h$ to be strongly convex, any line search procedure, and the target accuracy in advance. However, its complexity bounds are slightly worse than the aforementioned ones for the CGT method.

}
Finally, we consider problem \eqnok{NLP} under stochastic setting where only noisy first-order information of $f$ is available via
subsequent calls to a stochastic oracle ($\SO$). More specifically, when $\SO$ receives $x_k \in X$ as the input at the $k$-th call,
it outputs stochastic gradient $G(x_k, \xi_k)$, where $\{\xi_k\}_{k \ge 1}$ are random vectors whose distributions $P_k$ are supported on $\Xi_k \subseteq \bbr^d$. The following assumption is made for the stochastic gradients returned by the $\SO$.

\vgap

\begin{assumption} \label{assump_st_grad}
For any $x \in X$ and $k \ge 1$, we have
\beqa
&\mbox{a)}& \, \, \bbe [G(x, \xi_k)] = f'(x), \label{ass1.a} \\
&\mbox{b)} & \, \, \bbe \left[ \|G(x, \xi_k) - f'(x)\|_*^2 \right] \le \sigma^2. \label{ass1.b}
\eeqa

\end{assumption}
{\color{black}
We present a randomized stochastic CGT (RSCGT) method for solving problem \eqnok{NLP} and show that the total number of calls to the $\SO$ performed by this method to have $\bbe[g_R] \le \epsilon$ is bounded by
\beq\label{nocvx_best_st}
{\cal O}\left\{\sigma^2 \left(\frac{L_\nu^{1+\nu}}{\epsilon^{1+3\nu}}\right)^\frac{1}{\nu}\right\}, \qquad \qquad \qquad
{\cal O}\left\{\sigma^2 \left(\frac{L_\nu}{\epsilon^{1+2\nu}}\right)^\frac{1}{\nu}\right\},
\eeq
when $f$ is nonconvex or convex, respectively. Here, the expectation is taken w.r.t $\xi$ and $R$, where $R$ is a discrete random variable whose probability mass function is supported on $\{1,2,\dots, N\}$ for some given iteration limit $N \ge 1$. This type of randomized framework for stochastic optimization first was proposed by Ghadimi and Lan~\cite{GhaLan12-2} and later used in \cite{GhaLanZhang14,GhaLan15,DangLan13-1}, for general stochastic approximation algorithms applying to smooth (un)constrained problems. To the best of our knowledge, the complexity bounds in \eqnok{nocvx_best_st} seems to be the first ones for the conditional gradient type methods when applied to stochastic weakly smooth nonconvex and convex problems. Note that the first bound is reduced to ${\cal O}(1/\epsilon^4)$, when $\nu=1$, which is also recently obtained by \cite{ReSrPoSm16} for the classical CG method ($h \equiv 0$) when applied to stochastic smooth nonconvex problems\footnote{\cite{ReSrPoSm16} was released several months after releasing the first version of this work.}.
%The second bound under convexity of $f$ is in the same order of the one obtained in \cite{} for the classical CG method when applied to stochastic smooth convex problems.
Moreover, when $h$ is strongly convex, the above bounds are improved to
\beq\label{nocvx_best_st2}
{\cal O}\left\{\sigma^2 \left(\frac{L_\nu^{1+2\nu}}{(\mu\epsilon)^\frac{1+4\nu}{2}}\right)^\frac{1}{\nu}\right\}, \qquad \qquad \qquad
{\cal O}\left\{\sigma^2 \left(\frac{L_\nu}{(\mu\epsilon)^\frac{1+2\nu}{2}}\right)^\frac{1}{\nu}\right\}.
\eeq
The first bound is better than ${\cal O}(\epsilon^{-\tfrac{(1+\nu)^2}{\nu}})$ obtained in \cite{JiaZha14} (when $h$ is convex) for a stochastic gradient type method including subproblems in the form of \eqnok{subproblem2} in which the Bregman distance function is replaced by  $\|\cdot\|^{1+\nu}$ and under stronger assumption than Assumption~\ref{assump_st_grad}.b). In addition, when $\nu=1$ and $\mu>0$, by using different stepsize policy we can further improve the complexity bounds in \eqnok{nocvx_best_st2} to
\[
{\cal O}\left\{\frac{\sigma^2 L_1}{\mu^2 \epsilon^2}\right\}, \qquad \qquad \qquad
{\cal O}\left\{\frac{\sigma^2 L_1}{\mu^2 \epsilon} \log \frac{1}{\epsilon}\right\}.
\]
Note that the first bound is in the same order of the one obtained in \cite{GhaLanZhang14,GhaLan15} for stochastic gradient type algorithms, and the second one is optimal up to a logarithmic constant for solving stochastic smooth strongly convex optimization problems in terms of dependence on the accuracy $\epsilon$ (see e.g., \cite{nemyud:83,GhaLan13-1}). We should mention that the bounds in are also obtained in \cite{LanZho16} for a method using only linear optimization in subproblems when applied to smooth stochastic optimization.
A brief summary of comparing our results with the ones for conditional gradient type methods when applied to weakly smooth optimization problems is given in Table~\ref{comp}.
\begin{table}[h]
\caption{Complexity results for algorithms including subproblems in the form of \eqnok{subproblem} for finding an $\epsilon$-approximate stationary point (or optimal point when $f$ is convex) of problem \eqnok{NLP} when $\nu \in (0,1]$.}
\vgap
\setlength\extrarowheight{2.5pt}
\centering
\label{comp}
\footnotesize
{\color{black}\begin{tabular}{|c|c|c|c|c|}
\hline
Algorithms&$f$&$h$&$\sigma$&Complexity\\[4pt]
\hline
\cite{JiaZha14}&nonconvex&convex&$0$&${\cal O}(\epsilon^{-\tfrac{1+\nu}{\nu}})$\\[4pt]
\hline
%\cite{ReSrPoSm16}&nonconvex&-&$1$&$>0$&${\cal O}(\epsilon^{-4})$\\[4pt]
%\hline
%\multirow{4}{*}{\cite{LanZho16}}&\multirow{2}{*}{convex}&\multirow{4}{*}{-}&\multirow{4}{*}{$1$}&$0$&${\cal O}(\epsilon^{-\tfrac{1}{2}})$\\[4pt]
%\cline{5-6}
%& & & &$>0$ & ${\cal O}(\epsilon^{-2})$\\[4pt]
%\cline{2-2} \cline{5-6}
%& \multirow{2}{*}{strongly convex}&  & &$0$ & ${\cal O}(\log \tfrac{1}{\epsilon})$\\[4pt]
%\cline{5-6}
%& & & &$>0$ & ${\cal O}(\tfrac{1}{\epsilon})$\\[4pt]
%\hline
\multirow{2}{*}{\cite{Nest15}}&\multirow{2}{*}{convex}&convex&\multirow{2}{*}{$0$}&${\cal O}(\epsilon^{-\tfrac{1}{\nu}})$\\[4pt]
\cline{3-3}\cline{5-5}
&&strongly convex&  &${\cal O}(\epsilon^{-\tfrac{1}{2\nu}})$\\[4pt]
\hline
\multirow{10}{*}{}&\multirow{4}{*}{nonconvex}&convex&\multirow{2}{*}{$0$}&${\cal O}(\epsilon^{-\frac{1+\nu}{\nu}})$\\[4pt]\cline{3-3}\cline{5-5}
&&strongly convex&&${\cal O}(\epsilon^{-\frac{1+\nu}{2\nu}})$\\[4pt]
\cline{3-5}
&&convex&\multirow{3}{*}{$>0$}&${\cal O}(\epsilon^{-\frac{1+3\nu}{\nu}})$\\[4pt]\cline{3-3}\cline{5-5}
This paper&&strongly convex &&${\cal O}(\epsilon^{-\tfrac{1+4\nu}{2\nu}})$\\[4pt]
\cline{2-5}
%& &strongly convex&$1$&&${\cal O}(\epsilon^{-2})$\\[4pt]
%\cline{2-6}
&\multirow{4}{*}{}&convex&\multirow{2}{*}{$0$}&${\cal O}(\epsilon^{-\frac{1}{\nu}}\log \tfrac{1}{\epsilon})$\\[4pt]\cline{3-3} \cline{5-5}
&convex&strongly convex &&${\cal O}(\epsilon^{\tfrac{\nu-1}{2\nu}} \log \tfrac{1}{\epsilon})$\\[4pt]
\cline{3-5}
&&convex&\multirow{3}{*}{$>0$}&${\cal O}(\epsilon^{-\tfrac{1+2\nu}{\nu}})$\\[4pt]\cline{3-3} \cline{5-5}
&&strongly convex &&${\cal O}(\epsilon^{-\tfrac{1+2\nu}{2\nu}})$\\[4pt]
%\cline{3-4}\cline{5-5}
%& &strongly convex&$1$&&${\cal O}(\epsilon^{-1}\log \tfrac{1}{\epsilon})$\\[4pt]
\hline
\end{tabular}}
\end{table}

}

Rest of the paper is organized as follows. In Section~\ref{sec_CGT}, we present the CGT method and its variants with establishing their convergence properties for solving problem \eqnok{NLP}. In Section~\ref{sec_SCGT}, we generalize some of our results to the stochastic setting of problem \eqnok{NLP}. Finally, we conclude the paper by some remarks in Section~\ref{conl_rem}.

\vgap

{\bf Notation.}
For a differentiable function $h: \bbr^n \to \bbr$, $h'(x)$ is the gradient of $h$ at $x$. More generally, when
$h$ is a proper convex function, $p_h(x) \in \partial h(x)$ denotes a subgradient of $h$ in the subdifferential set of $h$ at $x$.
For $x \in \bbr^n$ and $y \in \bbr^n$, $\langle x, y \rangle$ is the standard inner product in $\bbr^n$, and $\|\cdot\|$ denotes a general norm unless otherwise is mentioned. For any real number $r$, $\lceil r \rceil$ and $\lfloor r \rfloor$ denote the nearest integer to $r$ from above and below, respectively.
%Moreover, we let ${\cal B}(\bar x, r)$ to be the ball with radius $r$ centered at $\bar x$ i.e., ${\cal B}(\bar x, r) =\{x \in \bbr^n \ \ | \ \ \|x-\bar x\| \le r\}$.
{\color{black} We also denote the diameter of set $X$ (whenever it is assumed to be bounded) by $D_X = \max_{x,y \in X} \|x-y\|$.
When $f$ is convex, an optimal solution of \eqnok{NLP} is denoted by $x^* \in X$ and $\Psi^* = \Psi(x^*)$. ${\cal O}(1)$ also stands for a positive constant.}

\setcounter{equation}{0}
\section{Conditional gradient type methods for composite optimization}\label{sec_CGT}

Our goal in this section is to present a conditional gradient type (CGT) method for solving problem \eqnok{NLP}. Note that the classical CG method cannot be directly applied to this problem since $h$ is not necessarily differentiable. However, it is shown that if $h$ is Lipschitz continuous, then it can be approximated by using some smoothing techniques and hence the CG method can be used for solving problem \eqnok{NLP} when $f$ is smooth and convex~\cite{Lan13-2}. Different from the classical CG method, we minimize summation of the (strongly) convex function $h$ and a linear approximation of function $f$ in the subproblems of the CGT method.
%While the existence of function $h$ can make the subproblems harder to solve, it improves the convergence rate of the CG method for solving strongly convex problems. Moreover, it allows us to provide the convergence results of the CGT method when $f$ is possibly nonconvex.
%Throughout this section, we assume that $f$ satisfies \eqnok{f_holder1}.

Below we precisely describe the conditional gradient type (CGT) method.

\begin{algorithm} [H]
	\caption{The conditional gradient type (CGT) algorithm}
	\label{alg_CGT}
	\begin{algorithmic}

\STATE Input:
$x_0 \in X$, $\{\alpha_k\}_{k \ge 1} \in (0,1]$.
%\begin{itemize}
%\FOR {$k=1, 2, \ldots$ }
\STATE 0. Set $y_0 = x_0$ and $k=1$.
\STATE 1. Compute $f'(y_{k-1})$ and set
\beqa
x_k &=& w(f'(y_{k-1})) := \arg\min\limits_{u \in X} \left\{ \langle f'(y_{k-1}), u \rangle
+ h(u) \right\}, \label{def_xk}\\
y_k &=& (1 - \alpha_k) y_{k-1}+ \alpha_k x_k, \label{def_yk}
\eeqa
\STATE 2. Set $k \leftarrow k+1$ and go to step 1.
	\end{algorithmic}
\end{algorithm}

\vgap

We now add a few remarks about the above algorithm. First, note that as mentioned above, the CGT method only differs from the classical CG method in the existence of $h$ in subproblem \eqnok{def_xk}. As we will show, while this may make the subproblems harder to solve, it improves rate of convergence of the CG method when $h$ is strongly convex.
%Moreover, it allows us to provide the convergence results of the CGT method when $f$ is possibly nonconvex.
{\color{black}Second, as mentioned in Section~\ref{sec_intro}, there are several examples of $h$ where solving \eqnok{def_xk} is not harder than subproblems of the classical CG methods. Moreover, when $h$ is strongly convex, by properly choosing stepsizes $\alpha_k$, we can relax the boundedness assumption of feasible set $X$ required in the setting of iterative methods minimizing only linear approximation of the objective function at each iteration.
% and chosen as the prox function, then difficulty of solving subproblem \eqnok{def_xk} is equivalent to that of general first-order methods. However, in this method the prox function is usually considered to be continuously differentiable which is not true for $h(x)$ in our setting. We also do not use linear approximation of $h$ in \eqnok{def_xk} and hence, the updating step of the CGT method can be computationally much cheaper than that of general first-order methods when computing (sub)gradient of $h$ is not easy.
Third, note that Algorithm~\ref{alg_CGT} is essentially the same as the ones proposed  in \cite{Nest15,JiaZha14}. However, we consider different assumptions on the objective function and discuss different choices of stepsize policies for this algorithm which improve its existing complexity bounds for solving both (strongly) convex and nonconvex problems.} Finally, since $f$ is possibly nonconvex in \eqnok{NLP}, we need to define a different termination criterion to establish the convergence of Algorithm~\ref{alg_CGT}. To do so, we can define gradient mapping at point $x_k \in X$ as
\beq \label{grad_map}
g_{_{X,k}} =  g_{_{X}}(w(f'(y_{k-1}))):= y_{k-1}-x_k.
\eeq
{\color{black}Note that the extra notation of $w(\cdot)$ will be helpful in our convergence analysis when we consider the stochastic setting of problem \eqnok{NLP}.} The equivalent definitions of the gradient mapping have been widely used based on the updating step of different algorithms (see e.g., \cite{Nest13-1,GhaLan15}). Note that by the optimality condition of \eqnok{def_xk} and for all $x \in X$, we have
\beq \label{opt_cond}
\langle -[f'(y_{k-1})+p_h(x_k)], x-x_k \rangle \le 0 \to \langle - p_\Psi(x_k) ,  x-x_k \rangle \le \langle f'(y_{k-1})-f'(x_k), x-x_k \rangle.
\eeq
%where ${\cal N}_X (x)$ shows the normal cone at $x \in X$.
Moreover, if $X$ is bounded, then by Cauchy-Schwarz inequality and  \eqnok{f_holder1}, we have
\beq \label{opt_cond2}
\langle f'(y_{k-1})-f'(x_k), x-x_k \rangle \le
\| f'(y_{k-1})-f'(x_k)\|_* \|x-x_k\| \le L_v D_X \|y_{k-1}-x_k\|^{\nu} = L_v D_X \|g_{_{X,k}}\|^{\nu}.
\eeq
{\color{black}
Hence, if $\|g_{_{X,k}}\| \le [\epsilon/(L_v D_X)]^\frac{1}{\nu}$, then $x_k$ is an $\epsilon$-approximate stationary point of problem \eqnok{NLP} i.e.,
\[
\langle - p_\Psi(x_k) ,  x-x_k \rangle \le \epsilon \ \ \forall x \in X.
\]
When $f$ is convex, the above relation also implies $\Psi(x_k)-\Psi(x^*) \le \epsilon$.

Another termination criterion can be defined as
\beqa \label{def_gk}
g_k = g(w(f'(y_{k-1})))&:=& \langle f'(y_{k-1}), y_{k-1}\rangle + h(y_{k-1})+ \max_{x \in X} \{-\langle f'(y_{k-1}), x \rangle - h(x)\} \nn \\
&=& \langle f'(y_{k-1}), y_{k-1}-x_k \rangle + h(y_{k-1}) - h(x_k) \ge 0,
\eeqa
where the last equality is due to \eqnok{def_xk} and the inequality follows from convexity of $h$ and \eqnok{opt_cond}. Similarly, for any $x \in X$, we have
\beqa
\langle -f'(y_{k-1}), x-y_{k-1} \rangle + h(y_{k-1}) - h(x) &\le& g_k, \nn \\
\langle -f'(y_{k-1}), x-y_{k-1} \rangle + h(y_{k-1}) - h(x) &\le& \langle -p_\Psi(y_{k-1}), x-y_{k-1} \rangle.\label{weaker_stationry}
\eeqa
Observe that if $y_{k-1}$ is a stationary point of problem \eqnok{NLP}, then RHS of the above inequality is nonpositive for all $x \in X$. Hence, setting $x=x_k$, the LHS equals $g_k$ which becomes nonpositive and together with \eqnok{def_gk} imply that $g_k$ is zero. Therefore, $g_k=0$ is a necessary condition for $y_{k-1}$ to be a stationary point. Note, however, that \eqnok{opt_cond} implies that $\|g_{_{X,k}}\|=0$ is a sufficient condition for $x_k$($=y_{k-1}$) to be a stationary point.
%Therefore, $g_k$ is a weaker notion of approximate stationary points of problem \eqnok{NLP} than $\|g_{_{X,k}}\|$.
It can be also seen that when $h \equiv 0$, both criteria are equivalent (at different points). In this case, $g_k$ will be the Frank-Wolf gap defined in the literature of conditional gradient methods. Hence, it makes sense to consider it as a generalized Frank-Wolf gap when $h$ exists.

On the other hand, when $f$ is convex, by \eqnok{def_gk} we have
\beq \label{gk_cvxbnd}
g_k =  h(y_{k-1})+ \max_{x \in X} \{\langle f'(y_{k-1}), y_{k-1}-x \rangle - h(x)\} \ge h(y_{k-1})+ \max_{x \in X} \{f(y_{k-1})- [f(x)+h(x)]\}
=\Psi(y_{k-1})-\Psi(x^*),
\eeq
which implies that $g_k$ provides an upper bound on the optimality gap $\Psi(y_{k-1})-\Psi(x^*)$.
Next result summarizes the relation between $g_k$ and $g_{_{X,k}}$ . It should be pointed out that a similar result stating the relation between $g_k$ and the gradient mapping for proximal methods have been proposed in \cite{JiaZha14} when $h$ is only convex.
\begin{lemma}\label{lemma_gk-gX}
Let $\{x_k,y_k\}_{k \ge 0}$ be generated by Algorithm~\ref{alg_CGT}. Also, suppose that $g_{_{X,k}}$ and $g_k$ are defined in \eqnok{grad_map} and \eqnok{def_gk}, respectively.
\begin{itemize}
\item [a)] If there exist positive constants $M_f$ and $M_h$ such that $\|f'(x)\|_* \le M_f$ and $\|p_h(x)\|_* \le M_h$ for any $x \in X$, then we have $g_k \le (M_f+M_h) \|g_{_{X,k}}\| \ \ \forall k \ge 1$.

\item [b)]If $h$ is strongly convex with parameter $\mu>0$, then we have $\|g_{_{X,k}}\|^2 \le 2 g_k/\mu \ \ \forall k \ge 1$.
\end{itemize}

\end{lemma}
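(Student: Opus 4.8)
The plan is to observe that both parts follow almost immediately once we recognize that $g_k$ in \eqnok{def_gk} is exactly the gap in the subproblem objective evaluated between $y_{k-1}$ and its minimizer $x_k$. Concretely, I would set $\phi(u) := \langle f'(y_{k-1}), u \rangle + h(u)$, so that $x_k = \arg\min_{u \in X} \phi(u)$ by \eqnok{def_xk}, and a direct expansion of \eqnok{def_gk} gives $g_k = \phi(y_{k-1}) - \phi(x_k)$. With this rewriting in hand, part (a) is an upper bound on this gap and part (b) a lower bound, each exploiting a different structural property of $\phi$, so the two items are proved completely independently.

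For part (a) I would bound $g_k = \langle f'(y_{k-1}), y_{k-1} - x_k \rangle + [h(y_{k-1}) - h(x_k)]$ term by term. The Cauchy--Schwarz inequality together with $\|f'(y_{k-1})\|_* \le M_f$ controls the inner product by $M_f \|y_{k-1} - x_k\|$. For the second term I would invoke convexity of $h$ to write $h(y_{k-1}) - h(x_k) \le \langle p_h(y_{k-1}), y_{k-1} - x_k \rangle$ for some $p_h(y_{k-1}) \in \partial h(y_{k-1})$, and then apply Cauchy--Schwarz again with $\|p_h(y_{k-1})\|_* \le M_h$. Summing the two estimates and recalling $\|g_{_{X,k}}\| = \|y_{k-1} - x_k\|$ from \eqnok{grad_map} yields $g_k \le (M_f + M_h)\|g_{_{X,k}}\|$.

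For part (b) the key tool is the standard fact that if $\phi$ is $\mu$-strongly convex and $x_k$ minimizes it over the convex set $X$, then $\phi(y) \ge \phi(x_k) + \tfrac{\mu}{2}\|y - x_k\|^2$ for every $y \in X$. Since the linear term of $\phi$ is affine and hence does not affect the modulus of convexity, $\phi$ inherits strong convexity with parameter $\mu$ directly from the assumption \eqnok{h_strng} on $h$. I would derive the minimizer inequality by testing strong convexity along the segment $\alpha y + (1-\alpha)x_k$, using optimality of $x_k$ to discard $\phi(\alpha y + (1-\alpha)x_k) \ge \phi(x_k)$, dividing by $\alpha$, and letting $\alpha \to 0^+$. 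Setting $y = y_{k-1}$ then gives $g_k = \phi(y_{k-1}) - \phi(x_k) \ge \tfrac{\mu}{2}\|g_{_{X,k}}\|^2$, which rearranges to the claimed bound $\|g_{_{X,k}}\|^2 \le 2 g_k / \mu$.

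The computations are short and there is no serious obstacle here; the only step requiring mild care is the minimizer inequality for strongly convex functions over a constrained set in part (b). One must establish it through the variational (segment) argument above rather than a gradient-based one, precisely because $h$ --- and hence $\phi$ --- may be nondifferentiable at $x_k$, so a first-order optimality condition of the form $\langle \phi'(x_k), y - x_k\rangle \ge 0$ is not directly available.
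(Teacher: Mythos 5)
Your proof is correct, and part (a) is essentially the paper's argument: the paper collects $f'(y_{k-1})+p_h(y_{k-1})$ into a single vector and applies Cauchy--Schwarz plus the triangle inequality, whereas you apply Cauchy--Schwarz to the two terms separately, which is the same estimate. In part (b) you arrive at the same key inequality $g_k \ge \tfrac{\mu}{2}\|y_{k-1}-x_k\|^2$ by a slightly different route: the paper writes the subgradient form of strong convexity of $h$ at $x_k$, namely $h(y_{k-1}) \ge h(x_k)+\langle p_h(x_k), y_{k-1}-x_k\rangle + \tfrac{\mu}{2}\|y_{k-1}-x_k\|^2$, and then discards the linear term using the (subgradient) first-order optimality condition \eqnok{opt_cond} of the subproblem \eqnok{def_xk}; you instead establish quadratic growth of $\phi(u)=\langle f'(y_{k-1}),u\rangle+h(u)$ at its constrained minimizer directly from the segment definition \eqnok{h_strng} by letting $\alpha\to 0^+$. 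Both are valid; your version works straight from the definition of strong convexity given in the paper and sidesteps subdifferentials of $h$ at $x_k$ entirely, while your stated worry about the unavailability of a first-order optimality condition is a bit overcautious --- the subgradient form $\langle f'(y_{k-1})+p_h(x_k), x-x_k\rangle \ge 0$ is available for nondifferentiable convex $\phi$ and is exactly what the paper uses.
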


\begin{proof}
First, note that by \eqnok{def_gk}, convexity of $h$, Cauchy-Schwarz, and triangle inequalities, we have
\[
g_k \le \langle f'(y_{k-1})+p_h(y_{k-1}), y_{k-1}-x_k \rangle \le \|f'(y_{k-1})+p_h(y_{k-1})\|_* \|y_{k-1}-x_k\| \le (\|f'(y_{k-1})\|_*+\|p_h(y_{k-1})\|_* )\|y_{k-1}-x_k\|,
\]
which together with \eqnok{grad_map} and boundedness assumption on (sub)gradients of $f$ and $h$ imply part a).
Second, by \eqnok{def_gk}, strong convexity of $h$, and optimality condition of \eqnok{def_xk}, we have
\[
g_k \ge \langle f'(y_{k-1})+p_h(x_k), y_{k-1}-x_k \rangle + \frac{\mu}{2}\|y_{k-1}-x_k \|^2 \ge \frac{\mu}{2}\|y_{k-1}-x_k \|^2,
\]
which together with \eqnok{grad_map} imply part b).
\end{proof}

\vgap

While, $g_k$ denotes a weaker notation of approximate stationary points than $\|g_{_{X,k}}\|$ when $X$ is bounded and $h$ is only convex, the latter cannot be theoretically guaranteed to be convergent. Furthermore, when $h$ is strongly convex with $\mu >0$, Lemma~\ref{lemma_gk-gX}.b) implies that $g_k$ is an upper bound on $\|g_{_{X,k}}\|^2$. In this case, $g_k=0$ is a necessary and sufficient condition for $y_{k-1}$ to be a stationary point of problem \eqnok{NLP}. Therefore, to include more general cases of $h$ and $X$, we provide convergence analysis of Algorithm~\ref{alg_CGT} and its variants based on $g_k$ throughout the paper.
}

Below we establish main convergence properties of Algorithm~\ref{alg_CGT}.
\begin{theorem} \label{theo_CGT}
Suppose that the sequence $\{x_k,y_k\}_{k \ge 0}$ is generated by Algorithm~\ref{alg_CGT}.
\begin{itemize}
{\color{black}
\item [a)] If stepsizes $\alpha_k \in (0,1]$ are chosen such that
\beq \label{cond_alpha_gk}
\alpha_k^{\nu} \le \frac{(1+\nu) g_k}{4 L_\nu \|x_k-y_{k-1}\|^{1+\nu}}  \ \ \forall k \ge 1,
\eeq
then for any $N \ge 1$, we have
\beq \label{main_nocvx_gk}
\sum_{k=1}^N  \alpha_k g_k  \le \frac{4}{3}[\Psi(x_0)- \Psi^*].
\eeq
If, in addition, $f$ is convex, then we have
\beq\label{main_cvx_gk}
\frac{\Psi(y_N)- \Psi^*}{A_N}+\sum_{k=1}^N  \frac{\alpha_k g_k} {4A_k}
\le  \Psi(x_0)- \Psi^*,
\eeq
where
\beq \label{def_Alpha}
A_0 = 1, \ \
A_N = \prod_{k=1}^N \left(1-\frac{\alpha_k}{2}\right) \ \ \forall N \ge 1.
\eeq

\item [b)] If $h$ is strongly convex with parameter $\mu>0$ and stepsizes $\alpha_k \in (0,1]$  are chosen such that
\beq \label{cond_alpha}
\alpha_k^{\nu} \le \frac{(1+\nu) g_k^\frac{1-\nu}{2}}{4 L_\nu} \left(\frac{\mu}{2} \right)^\frac{1+\nu}{2} \ \ \forall k \ge 1,
\eeq
then for any $N \ge 1$, we still obtain \eqnok{main_nocvx_gk} and \eqnok{main_cvx_gk} when $f$ is nonconvex or convex, respectively.
}
\end{itemize}

\end{theorem}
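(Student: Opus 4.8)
The plan is to derive a single per-iteration descent inequality and then specialize it to each of the three regimes. First I would combine the H\"older bound \eqnok{f_holder2}, applied to the update $y_k = y_{k-1} + \alpha_k(x_k - y_{k-1})$, with the strong convexity inequality \eqnok{h_strng} written for the same convex combination $y_k = \alpha_k x_k + (1-\alpha_k)y_{k-1}$. Using $\|y_k - y_{k-1}\| = \alpha_k \|x_k - y_{k-1}\|$ together with the identity $\langle f'(y_{k-1}), x_k - y_{k-1}\rangle + h(x_k) - h(y_{k-1}) = -g_k$ coming from the definition \eqnok{def_gk}, adding the two estimates gives the master inequality
\[
\Psi(y_k) - \Psi(y_{k-1}) \le -\alpha_k g_k + \frac{L_\nu}{1+\nu}\,\alpha_k^{1+\nu}\|x_k - y_{k-1}\|^{1+\nu} - \frac{\mu\,\alpha_k(1-\alpha_k)}{2}\|x_k - y_{k-1}\|^2.
\]
This is the engine for the whole theorem; the stepsize rules \eqnok{cond_alpha_gk} and \eqnok{cond_alpha} are reverse-engineered precisely so that the positive H\"older error term is dominated by $\tfrac14\alpha_k g_k$.

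For part a) I would discard the last term, which is nonpositive since $\mu \ge 0$ and $\alpha_k \in (0,1]$. Condition \eqnok{cond_alpha_gk} is exactly what makes $\frac{L_\nu}{1+\nu}\alpha_k^{1+\nu}\|x_k - y_{k-1}\|^{1+\nu} \le \frac{\alpha_k g_k}{4}$, leaving $\Psi(y_k) - \Psi(y_{k-1}) \le -\tfrac34\alpha_k g_k$. Telescoping this from $k=1$ to $N$ and using $y_0 = x_0$ together with $\Psi(y_N) \ge \Psi^*$ yields \eqnok{main_nocvx_gk}. For the convex refinement \eqnok{main_cvx_gk} I would keep the same drop but split $\tfrac34\alpha_k g_k = \tfrac12\alpha_k g_k + \tfrac14\alpha_k g_k$ and invoke the bound $g_k \ge \Psi(y_{k-1}) - \Psi^*$ from \eqnok{gk_cvxbnd} on the first half only. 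Setting $\delta_k := \Psi(y_k) - \Psi^*$, this produces the contraction $\delta_k \le (1 - \tfrac{\alpha_k}{2})\delta_{k-1} - \tfrac14\alpha_k g_k$; dividing by $A_k = (1-\tfrac{\alpha_k}{2})A_{k-1}$ makes the leading term telescope into $\delta_{k-1}/A_{k-1}$, and summing with $A_0 = 1$ gives \eqnok{main_cvx_gk}.

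For part b) the key observation is that one should not spend the strong convexity term inside the descent inequality, but instead use strong convexity to control the troublesome factor $\|x_k - y_{k-1}\|^{1+\nu}$. Dropping the last term once more and invoking Lemma~\ref{lemma_gk-gX}.b), which gives $\|y_{k-1} - x_k\|^2 \le 2 g_k/\mu$, I would replace $\|x_k - y_{k-1}\|^{1+\nu}$ by $(2g_k/\mu)^{(1+\nu)/2}$. Then \eqnok{cond_alpha} is precisely the inequality guaranteeing $\frac{L_\nu}{1+\nu}\alpha_k^{1+\nu}(2g_k/\mu)^{(1+\nu)/2} \le \frac{\alpha_k g_k}{4}$, which again reproduces $\Psi(y_k) - \Psi(y_{k-1}) \le -\tfrac34\alpha_k g_k$. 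From this common per-iteration inequality, \eqnok{main_nocvx_gk} and, in the convex case, \eqnok{main_cvx_gk} follow by the identical telescoping arguments used in part a).

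I expect the only delicate points to be the bookkeeping of the exact constants so that the factor $\tfrac14$ (and hence the $\tfrac43$ in \eqnok{main_nocvx_gk}) comes out right, and the structural insight in part b) that the gap lower bound from Lemma~\ref{lemma_gk-gX}.b), rather than the strong convexity term in the descent inequality itself, is the correct vehicle for exploiting $\mu>0$. Once the master inequality is in hand, both conclusions in both parts reduce to the same two telescoping computations.
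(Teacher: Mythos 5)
Your proposal is correct and follows essentially the same route as the paper: the master inequality you derive is exactly the paper's per-iteration bound obtained by adding the H\"older estimate for $f$ to the strong-convexity estimate for $h$, the convex case is handled by the identical split of $\tfrac{3}{4}\alpha_k g_k$ and division by $A_k$, and part b) uses the same key step of invoking Lemma~\ref{lemma_gk-gX}.b) to replace $\|x_k-y_{k-1}\|^{1+\nu}$ by $(2g_k/\mu)^{(1+\nu)/2}$ before applying the stepsize condition. No gaps.
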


\vgap

\begin{proof}
We first show part a). Noting \eqnok{f_holder2} and \eqnok{def_yk}, we have
{\color{black}
\beqa
f(y_k) &\le& f(y_{k-1})+\langle f'(y_{k-1}), y_k-y_{k-1} \rangle + \frac{L_\nu}{1+\nu} \|y_k-y_{k-1}\|^{1+\nu} \nn \\
&=& f(y_{k-1})-\alpha_k \left[\langle f'(y_{k-1}), y_{k-1}-x_k \rangle - \frac{L_\nu \alpha_k^{\nu}}{1+\nu} \|x_k-y_{k-1}\|^{1+\nu}\right].\label{prof_hold0}
\eeqa
Also, noting \eqnok{h_strng} and \eqnok{def_yk}, we have
\beq \label{h_strng_2}
h(y_k) \le (1-\alpha_k) h(y_{k-1}) +\alpha_k h(x_k)-\frac{\alpha_k(1-\alpha_k)\mu}{2} \|x_k-y_{k-1}\|^2.
\eeq
Adding the above two inequalities and noting \eqnok{def_gk}, we obtain
\beq \label{prof_hold2_gk}
\Psi(y_k) \le \Psi(y_{k-1})- \alpha_k \left[g_k - \frac{L_\nu \alpha_k^{\nu}}{1+\nu} \|x_k-y_{k-1}\|^{1+\nu}+\frac{(1-\alpha_k)\mu}{2} \|x_k-y_{k-1}\|^2\right],
\eeq
which after re-arranging the terms and noting \eqnok{cond_alpha_gk} imply that
\[
\frac{3\alpha_k g_k}{4} \le \Psi(y_{k-1})-\Psi(y_k) \ \ \forall k \ge 1.
\]
%\label{prof_hold3}
Summing up the above inequalities and noting the fact that $\Psi^* \le \Psi(y_k) \ \ \forall k \ge 0$, we obtain \eqnok{main_nocvx_gk}. Moreover, if $f$ is convex, then by subtracting $\Psi^*$ from both sides of \eqnok{prof_hold2_gk}, we have
\beqa
\Psi(y_k)  - \Psi^* &\le& (1-\frac{\alpha_k}{2})[\Psi(y_{k-1})- \Psi^*]+\frac{\alpha_k}{2}[\Psi(y_{k-1})- \Psi^*] - \alpha_k \left[g_k - \frac{L_\nu \alpha_k^{\nu}}{1+\nu} \|x_k-y_{k-1}\|^{1+\nu}\right] \nn \\
&\le& (1-\frac{\alpha_k}{2})\Psi(y_{k-1})- \alpha_k \left[\frac{g_k}{2} - \frac{L_\nu \alpha_k^{\nu}}{1+\nu} \|x_k-y_{k-1}\|^{1+\nu}\right]
\le (1-\frac{\alpha_k}{2})[\Psi(y_{k-1})-\Psi^*]- \frac{\alpha_k g_k}{4},\label{prof_hold_cvx_gk}
\eeqa
where the second and third inequalities are followed by \eqnok{gk_cvxbnd} and \eqnok{cond_alpha_gk}, respectively.
Dividing both sides of the above inequality by $A_k$, noting \eqnok{def_Alpha}, summing them up, and re-arranging the terms, we obtain \eqnok{main_cvx_gk}.

We now show part b). Suppose that $h$ is strongly convex with parameter $\mu>0$. Then, by \eqnok{grad_map}, Lemma~\ref{lemma_gk-gX}.b), and \eqnok{prof_hold2_gk}, we have
\beq \label{prof_hold2}
\Psi(y_k) \le \Psi(y_{k-1})- \alpha_k \left[g_k - \frac{L_\nu \alpha_k^{\nu}}{1+\nu} \left(\frac{2g_k}{\mu}\right)^\frac{1+\nu}{2} \right],
\eeq
which together with \eqnok{cond_alpha} and similarly to part a), imply \eqnok{main_nocvx_gk} and \eqnok{main_cvx_gk}.
}
\end{proof}

\vgap

Observe that the convergence results in Theorem~\ref{theo_CGT} rely on the choice of stepsizes $\{\alpha_k\}_{k \ge 1}$ which should satisfy \eqnok{cond_alpha_gk} or \eqnok{cond_alpha}. Below we specialize these results for specific choices of stepsizes.
{\color{black}
\begin{corollary} \label{corol_CGT}
Suppose that the sequence $\{x_k,y_k\}_{k \ge 0}$ is generated by Algorithm~\ref{alg_CGT} and a target accuracy $\epsilon>0$ is given.
%, and $N+1$ is the first iteration such that the generalized Frank-Wolf gap is not greater than $\epsilon$ i.e.,
%\beq \label{cond_epsilon}
%g_k > \epsilon \ \ k=1,\ldots,N, \ \ \ \  g_{N+1} \le \epsilon.
%\eeq
\begin{itemize}
\item [a)] Let $X$ be bounded and stepsizes $\{\alpha_k\}$ set to
\beq \label{cond_alpha1_gk}
\alpha_k =  \left(\frac{g_k}{g_k+\frac{4 L_\nu}{1+\nu}\|x_k-y_{k-1}\|^{1+\nu}}\right)^{\frac{1}{\nu}}  \ \ \forall k \ge 1,
\eeq
then for any $N \ge 1$, we have
\beq \label{main_nocvx-3_gk}
g_{\hat N} \le \max\left\{\frac{4\left[\Psi(x_0)- \Psi^*\right]}{3N \bar \alpha_{\epsilon, \nu}}, \epsilon \right\},
\eeq
where
\beq \label{k_min}
\hat N = \arg\min_{k \in \{1,\ldots,N\}} g_k,
\eeq
and
\beq \label{def_epsilon_gk}
\bar \alpha_{\epsilon,\nu} = \left(\frac{\epsilon}{\epsilon+\frac{4 L_\nu}{1+\nu}D_X^{1+\nu}}\right)^\frac{1}{\nu}.
\eeq
If, in addition, $f$ is convex, then we have
\beq \label{main_cvx-3_gk}
\Psi(y_{\hat N-1}) - \Psi^* \le g_{\hat N} \le  \max \left\{\frac{4[\Psi(x_0)- \Psi^*](1-\tfrac{\bar\alpha_{\epsilon, \nu}}{2})^N}{\bar\alpha_{\epsilon, \nu}}, \epsilon \right\}.
\eeq

\item [b)] If $h$ is strongly convex with parameter $\mu>0$ and stepsizes $\{\alpha_k\}$ are set to
\beq \label{cond_alpha1}
\alpha_k =  \left(\frac{g_k^\frac{1-\nu}{2}}{g_k^\frac{1-\nu}{2}+\frac{4 L_\nu}{1+\nu}\left(\frac{2}{\mu}\right)^\frac{1+\nu}{2}}\right)^{\frac{1}{\nu}}  \ \ \forall k \ge 1,
\eeq
then we have \eqnok{main_nocvx-3_gk} in which $\bar \alpha_{\epsilon,\nu}$ is replaced by
\beq \label{def_epsilon}
\alpha_{\epsilon,\nu} = \left(\frac{\epsilon^\frac{1-\nu}{2}}{\epsilon^\frac{1-\nu}{2}+\frac{4 L_\nu}{(1+\nu)}\left(\frac{2}{\mu}\right)^\frac{1+\nu}{2}}\right)^\frac{1}{\nu}.
\eeq
With this replacement, we also obtain \eqnok{main_cvx-3_gk} when $f$ is convex.
\end{itemize}
\end{corollary}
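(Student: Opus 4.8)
The plan is to verify that the stepsize choices in Corollary~\ref{corol_CGT} satisfy the hypotheses of Theorem~\ref{theo_CGT}, and then translate the aggregate bounds \eqnok{main_nocvx_gk} and \eqnok{main_cvx_gk} into a pointwise guarantee on $g_{\hat N}$. For part a), I would first observe that the prescribed $\alpha_k$ in \eqnok{cond_alpha1_gk} is designed precisely so that equality holds (up to the constant) in condition \eqnok{cond_alpha_gk}: raising \eqnok{cond_alpha1_gk} to the power $\nu$ gives $\alpha_k^\nu = g_k/(g_k + \tfrac{4L_\nu}{1+\nu}\|x_k-y_{k-1}\|^{1+\nu})$, and a short rearrangement shows this equals $(1+\nu)g_k/[4L_\nu\|x_k-y_{k-1}\|^{1+\nu}]$ times a factor bounded by one, so \eqnok{cond_alpha_gk} holds. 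I also need $\alpha_k\in(0,1]$, which is immediate since the fraction inside \eqnok{cond_alpha1_gk} lies in $(0,1]$. Hence \eqnok{main_nocvx_gk} is available.

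The key quantitative step is a lower bound on $\alpha_k$ in terms of the target accuracy $\epsilon$ and the diameter $D_X$. The map $t\mapsto (t/(t+c))^{1/\nu}$ is increasing in $t$ for fixed $c>0$, and $\|x_k-y_{k-1}\|\le D_X$ bounds the denominator constant $c = \tfrac{4L_\nu}{1+\nu}\|x_k-y_{k-1}\|^{1+\nu}$ from above by $\tfrac{4L_\nu}{1+\nu}D_X^{1+\nu}$. Therefore whenever $g_k\ge\epsilon$ we get $\alpha_k \ge \bar\alpha_{\epsilon,\nu}$ with $\bar\alpha_{\epsilon,\nu}$ as in \eqnok{def_epsilon_gk}. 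The argument then splits on a case distinction: either $g_{\hat N}=\min_k g_k \le \epsilon$ and we are done, or every $g_k>\epsilon$ for $k\le N$, in which case $\alpha_k\ge\bar\alpha_{\epsilon,\nu}$ for all such $k$, and plugging $\alpha_k g_k \ge \bar\alpha_{\epsilon,\nu} g_k \ge \bar\alpha_{\epsilon,\nu} g_{\hat N}$ into \eqnok{main_nocvx_gk} yields $N\bar\alpha_{\epsilon,\nu}\, g_{\hat N}\le\tfrac{4}{3}[\Psi(x_0)-\Psi^*]$, giving \eqnok{main_nocvx-3_gk}. For the convex refinement \eqnok{main_cvx-3_gk}, I would use \eqnok{main_cvx_gk}: the lower bound $\alpha_k\ge\bar\alpha_{\epsilon,\nu}$ forces each factor $1-\tfrac{\alpha_k}{2}\le 1-\tfrac{\bar\alpha_{\epsilon,\nu}}{2}$, so $A_N\le(1-\tfrac{\bar\alpha_{\epsilon,\nu}}{2})^N$; dropping the nonnegative sum in \eqnok{main_cvx_gk} and bounding $g_{\hat N}\le g_N$ against the telescoped product (again under the case $g_{\hat N}>\epsilon$) delivers the geometric decay, while $\Psi(y_{\hat N-1})-\Psi^*\le g_{\hat N}$ comes directly from \eqnok{gk_cvxbnd}.

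Part b) follows the same template but invokes \eqnok{cond_alpha} rather than \eqnok{cond_alpha_gk}: raising \eqnok{cond_alpha1} to the power $\nu$ and rearranging shows the strong-convexity condition is met, and now the relevant constant is $c=\tfrac{4L_\nu}{1+\nu}(2/\mu)^{(1+\nu)/2}$, which is independent of $\|x_k-y_{k-1}\|$ and hence of $D_X$. Crucially, this is why boundedness of $X$ is not needed in part b) — the $g_k^{(1-\nu)/2}$ appearing in \eqnok{cond_alpha1} replaces the role played by $\|x_k-y_{k-1}\|^{1+\nu}$, and its monotonicity in $g_k$ (using $\nu\le 1$ so the exponent $(1-\nu)/2\ge 0$) again gives $\alpha_k\ge\alpha_{\epsilon,\nu}$ whenever $g_k\ge\epsilon$, with $\alpha_{\epsilon,\nu}$ as in \eqnok{def_epsilon}. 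The rest is identical.

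The main obstacle I anticipate is not any single inequality but getting the monotonicity directions exactly right when verifying $g_k\ge\epsilon \Rightarrow \alpha_k\ge\bar\alpha_{\epsilon,\nu}$. One must confirm that increasing $g_k$ increases the fraction in \eqnok{cond_alpha1_gk} (true, since $g_k$ appears in the numerator while the competing term is $g_k$-independent in part a)) and simultaneously that replacing $\|x_k-y_{k-1}\|$ by its upper bound $D_X$ decreases the fraction, so that the two substitutions act in the safe direction to produce a valid lower bound. In part b) the delicate point is instead the sign of the exponent $(1-\nu)/2$, which requires the standing assumption $\nu\in(0,1]$; I would flag explicitly that at $\nu=1$ the $g_k$-dependence in \eqnok{cond_alpha1} drops out, making $\alpha_k$ constant, which is consistent with the smooth strongly convex case yielding the clean geometric rate.
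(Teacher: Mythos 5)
Your overall route is the paper's: verify that \eqnok{cond_alpha1_gk} and \eqnok{cond_alpha1} satisfy \eqnok{cond_alpha_gk} and \eqnok{cond_alpha} respectively, split on whether some $g_k\le\epsilon$ already holds, and in the remaining case use the monotonicity of $t\mapsto t/(t+c)$ together with $\|x_k-y_{k-1}\|\le D_X$ (or the $\mu$-dependent, diameter-free constant in part b)) to conclude $\alpha_k\ge\bar\alpha_{\epsilon,\nu}$, then feed $\alpha_k g_k\ge\bar\alpha_{\epsilon,\nu}\,g_{\hat N}$ into \eqnok{main_nocvx_gk}. All of that is correct and is exactly what the paper does, including your remark on why boundedness of $X$ is not needed in part b) and on the sign of the exponent $(1-\nu)/2$.

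The convex refinement, however, is described in a way that would fail as written. You propose ``dropping the nonnegative sum in \eqnok{main_cvx_gk} and bounding $g_{\hat N}\le g_N$ against the telescoped product.'' Dropping the sum $\sum_k \alpha_k g_k/(4A_k)$ leaves only $\Psi(y_N)-\Psi^*\le A_N[\Psi(x_0)-\Psi^*]$, and there is no way back from an upper bound on the optimality gap to an upper bound on $g_{\hat N}$: inequality \eqnok{gk_cvxbnd} gives $g_k\ge\Psi(y_{k-1})-\Psi^*$, not the reverse, while the claim \eqnok{main_cvx-3_gk} is an upper bound on $g_{\hat N}$ itself. The correct move, which is the paper's, is the mirror image: discard the nonnegative term $(\Psi(y_N)-\Psi^*)/A_N$, keep the sum, lower-bound each $g_k$ in it by $g_{\hat N}$, and evaluate the telescoping identity $\sum_{k=1}^N\alpha_k/(2A_k)=1/A_N-1\ge(\bar\alpha_{\epsilon,\nu}/2)(1-\bar\alpha_{\epsilon,\nu}/2)^{-N}$, which uses $A_N\le(1-\bar\alpha_{\epsilon,\nu}/2)^N$. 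This yields $g_{\hat N}\le 4[\Psi(x_0)-\Psi^*](1-\bar\alpha_{\epsilon,\nu}/2)^N/\bar\alpha_{\epsilon,\nu}$, and the left inequality in \eqnok{main_cvx-3_gk} is then \eqnok{gk_cvxbnd}, as you say. If by ``$g_{\hat N}\le g_N$'' you in fact meant $g_{\hat N}\le g_k$ for every index appearing in the sum, your plan coincides with the paper's; but as literally stated you are discarding the only term through which $g_{\hat N}$ can be controlled.
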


\begin{proof}
First, observe that the choice of stepsizes in \eqnok{cond_alpha1_gk} satisfies \eqnok{cond_alpha_gk}. Now, if there exists a $k \in \{1,\ldots,N\}$ such that $g_k \le \epsilon$, then both \eqnok{main_nocvx-3_gk} and \eqnok{main_cvx-3_gk} clearly hold due to \eqnok{k_min}. If this is not the case, then we must have $g_k > \epsilon \ \ k=1,\ldots,N$ which together with  \eqnok{def_epsilon_gk}, and boundedness of $X$ imply that
\beq \label{prof_hold_gk3}
\alpha_k \ge \bar \alpha_{\epsilon, \nu} \ \  k=1,\ldots, N,  \ \ \ \ \sum_{k=1}^N  \alpha_k g_k
\ge \sum_{k=1}^N  \bar \alpha_{\epsilon, \nu} g_k
\ge N \bar \alpha_{\epsilon, \nu} g_{\hat N}.
\eeq
Hence, \eqnok{main_nocvx-3_gk} still holds by noting \eqnok{main_nocvx_gk}. Second, noting \eqnok{def_Alpha} and the first inequality in \eqnok{prof_hold_gk3}, we have
\beqa
A_N &=& \prod_{k=1}^N \left(1-\frac{\alpha_k}{2}\right)  \le \prod_{k=1}^N \left(1-\frac{\bar \alpha_{\epsilon, \nu}}{2}\right) = \left(1-\frac{\bar\alpha_{\epsilon, \nu}}{2}\right)^N, \nn \\
\sum_{k=1}^N \frac{\alpha_k}{2A_k} &=& \sum_{k=1}^N \left[\frac{1}{A_k} - \frac{1-\tfrac{\alpha_k}{2}}{A_k}\right] =  \sum_{k=1}^N \left[\frac{1}{A_k} - \frac{1}{A_{k-1}} \right] = \frac{1}{A_N}-1
\ge \frac{\bar\alpha_{\epsilon, \nu}}{2}\left(1-\frac{\bar\alpha_{\epsilon, \nu}}{2}\right)^{-N} \label{def_Alpha2}.
\eeqa
%where the last inequality follows from the fact that $\alpha_{\epsilon, \nu} \le 1$.
Combining the above relations with \eqnok{gk_cvxbnd} under convexity of $f$, \eqnok{main_cvx_gk}, and noting that $\Psi(y_N) \ge \Psi^*$, we obtain \eqnok{main_cvx-3_gk}. Third, note that when $\mu>0$, the choice of stepsizes in \eqnok{cond_alpha1} satisfies \eqnok{cond_alpha} and then \eqnok{prof_hold_gk3},  \eqnok{def_Alpha2} hold by replacing $\bar \alpha_{\epsilon,\nu}$ with $ \alpha_{\epsilon,\nu}$ due to \eqnok{def_epsilon}. Hence, \eqnok{main_nocvx-3_gk} and \eqnok{main_cvx-3_gk} still hold with this replacement.

\end{proof}
}

\vgap

We now add a few remarks about the above results.
{\color{black}
First, observe that by \eqnok{def_epsilon_gk}, \eqnok{def_epsilon}, and convexity of the univariate function $x \to x^r (r>1)$ on $(0, \infty)$, we have
\beq \label{alpha_epslog_bnd}
\frac{1}{\bar \alpha_{\epsilon, \nu}} =
\left[1+\frac{2 L_\nu D_X^{1+\nu}}{(1+\nu)\epsilon}\right]^\frac{1}{\nu}
\le 2^\frac{1-\nu}{\nu}\left[1+\left(\frac{2 L_\nu D_X^{1+\nu}}{(1+\nu)\epsilon}\right)^\frac{1}{\nu}\right], \ \
\frac{1}{\alpha_{\epsilon, \nu}} \le 2^\frac{1-\nu}{\nu}\left[1+\left(\frac{4 L_\nu}{(1+\nu)\epsilon^\frac{1-\nu}{2}}\right)^\frac{1}{\nu}\left(\frac{2}{\mu}\right)^\frac{1+\nu}{2}\right],
\eeq
which together with \eqnok{main_nocvx-3_gk}, imply that if stepsizes are set to \eqnok{cond_alpha1_gk}, then the total number of iterations performed by Algorithm~\ref{alg_CGT} to have $ g_k \le \epsilon$ for at least one $k$, is bounded by
\beq \label{nocvx_bnd1_gk}
{\cal O}(1) \max \left\{1, \left(\frac{L_\nu D_X^{1+\nu}}{\epsilon}\right)^\frac{1}{\nu}\right\}\left\{\frac{\Psi(x_0)- \Psi^*}{\epsilon}\right\}.
\eeq
This bound (when the second term is the dominating one) is also obtained in \cite{JiaZha14} for Algorithm~\ref{alg_CGT} with different choice of stepsizes. Moreover, when $f$ is convex, the above complexity bound is improved to
\beq \label{cvx_grad_bnd1_gk}
{\cal O} (1) \max \left\{1, \left(\frac{L_\nu D_X^{1+\nu}}{\epsilon}\right)^\frac{1}{\nu}\right\}
\left\{\log \frac{\Psi(x_0)- \Psi^*}{\epsilon}+ \frac{1}{\nu} \log \frac{L_\nu D_X^{1+\nu}}{\epsilon} \right\},
\eeq
which also guarantees fining a solution $\bar x \in X$ such that $\Psi(\bar x)- \Psi(x^*) \le \epsilon$. This bound, up to a logarithmic factor, is similar to the bound ${\cal O}(\epsilon^{-\tfrac{1}{\nu}})$ bound obtained by Nesterov~\cite{Nest15} for Algorithm~\ref{alg_CGT} with different stepsize policy not depending on the problem parameters. However, it is worse than the optimal complexity bound of ${\cal O} (\epsilon^{-\tfrac{2}{1+3\nu}})$ for general first-order methods applied to weakly smooth convex optimization problems \cite{nemyud:83}.

Second, if $\mu>0$ and stepsizes are set to \eqnok{cond_alpha1_gk}, then the complexity bound \eqnok{nocvx_bnd1_gk} is improved to
\beq \label{nocvx_bnd2_gk}
{\cal O} (1) \max \left\{1, \left(\frac{L_\nu^2 }{\mu^{v(1+\nu)} \epsilon^{1-\nu}}\right)^\frac{1}{2\nu}\right\}\left\{\frac{\Psi(x_0)- \Psi^*}{\epsilon}\right\}.
\eeq
}

This bound (when the second term is the dominating one) is obtained by Ghadimi et al. \cite{GhaLanZha15} for a class of gradient type methods when applied to weakly smooth nonconvex problems. However, their methods do not require the knowledge of problem parameters in advance. It should be also mentioned that when $\nu=1$, \eqnok{nocvx_bnd2_gk} is reduced to the best known complexity ${\cal O}(1/\epsilon)$ for first-order methods when applied to smooth nonconvex problems. Furthermore, observe that if $f$ is convex ($\Psi$ is strongly convex), then the bounds in \eqnok{cvx_grad_bnd1_gk} and \eqnok{nocvx_bnd2_gk} are improved to
{\color{black}
\beq \label{cvx_grad_bnd1}
{\cal O} (1) \max \left\{1, \left(\frac{L_\nu^2 }{\mu^{v(1+\nu)} \epsilon^{1-\nu}}\right)^\frac{1}{2\nu}\right\}
\left\{\log \frac{\Psi(x_0)- \Psi^*}{\epsilon}+ \frac{1}{\nu} \log \frac{L_\nu^2}{\mu^{\nu(1+\nu)}\epsilon^{1-\nu}} \right\},
\eeq
}
which also guarantees fining a solution $\bar x \in X$ such that $\Psi(\bar x)- \Psi(x^*) \le \epsilon$. This bound also much better than the complexity of ${\cal O}(\epsilon^{-\tfrac{1}{2\nu}})$ obtained by Nesterov~\cite{Nest15} for Algorithm~\ref{alg_CGT} for solving strongly convex problem. However, it is worse than the optimal complexity bound ${\cal O} (\epsilon^{\tfrac{\nu-1}{1+3\nu}})$ for minimizing weakly smooth strongly convex problems which is achieved by a few more complicated first-order methods \cite{DeGlNe13,Ito15}. It should be also mentioned that these methods do not consider the composite setting. On the other hand, \eqnok{cvx_grad_bnd1} guarantees linear rate of convergence for Algorithm~\ref{alg_CGT} when $f$ is smooth, convex and $h$ is strongly convex.

{\color{black} Finally, observe that our choices of stepsizes in \eqnok{cond_alpha1_gk} and \eqnok{cond_alpha1} do not depend on the convexity of $f$ which provide unified analysis for Algorithm~\ref{alg_CGT}. Moreover, strong convexity of $h$ significantly improves the existing complexity bounds obtained by conditional gradient methods for both convex and nonconvex problems and relaxes the boundedness assumption of the feasible set. On the other hand, these stepsize policies depend on the problem parameters like $\nu$ and $L_\nu$ which may not be known exactly. Below we present a variant of Algorithm~\ref{alg_CGT} which does not require such knowledge of parameters in advance. It should be mentioned that convergence of this algorithm can be only guaranteed when $h$ is strongly convex.}

\begin{algorithm} [H]
	\caption{The conditional gradient type algorithm with line search}
	\label{alg_CGT_ls}
	\begin{algorithmic}

\STATE Input:
$x_0 \in X$, $\{\alpha_k\}_{k \ge 1} \in (0,1]$, $\gamma \in (0,1)$, and $\delta>0$ .
\STATE 0. Set $y_0 = x_0$ and $k=1$.
\STATE 1. Compute $f'(y_{k-1})$ and set $x_k$ to \eqnok{def_xk}.
\STATE 2. Set $y_k$ to \eqnok{def_yk} with the choice of
\beq \label{cond_alpha_ls}
\alpha_k = \gamma^{t_k}  \ \ \forall k \ge 1,
\eeq
where $t_k \ge 1$ is the smallest integer number such that
{\color{black}
\beq \label{line_search}
\Psi(y_k) \le \Psi(y_{k-1})- \alpha_k g_k + \delta \alpha_k
\eeq
}
holds, where $g_k$ is defined in \eqnok{def_gk}.
\STATE 3. Set $k \leftarrow k+1$ and go to step 1.
	\end{algorithmic}
\end{algorithm}

\vgap
Note that in the framework of the CGT method, $y_k$ is indeed a weighted average of $x_i \ \ i=0,1,\ldots,k$ due to \eqnok{def_yk}. Hence, purpose of the line search procedure defined in \eqnok{cond_alpha_ls} and \eqnok{line_search} is to only find the appropriate choice of $\alpha_k$ to specify these weights and subproblem \eqnok{def_xk} is solve once at each iteration.
Similar procedures have been also used for weakly smooth convex and nonconvex problems in \cite{Nest14,GhaLanZha15}. For each run of these line search procedures, a new subproblem is to be solved and gradient computation of the objective function at a new point may be required. Note however, that our procedure does not require this additional effort and hence it is computationally much cheaper per iteration.

In the next result, we show that the line search procedure defined in Step 2 of Algorithm~\ref{alg_CGT_ls} is run only for a finite number of times and hence the complexity of this algorithm is similar to that of Algorithm~\ref{alg_CGT} when $h$ is strongly convex.

\begin{theorem} \label{theo_CGT_ls}
Suppose that the sequence $\{x_k,y_k\}_{k \ge 0}$ is generated by Algorithm~\ref{alg_CGT_ls} and $h$ is strongly convex with parameter $\mu>0$. Then, the following statements hold.

\begin{itemize}
\item [a)] The procedure \eqnok{line_search} is well-defined in the sense that $t_k$ in \eqnok{cond_alpha_ls} is finite for any $k \ge 1$.
{\color{black}
\item [b)] For any $N \ge 1$, we have
\beq \label{main_nocvx_ls}
 \sum_{k=1}^N  \alpha_k g_k  \le \Psi(x_0)-\Psi^*+\delta \sum_{k=1}^N \alpha_k.
\eeq
If, in addition, $f$ is convex, then for any $N \ge 1$, we have
\beq
\frac{\Psi(y_N)- \Psi^*}{A_N}+\sum_{k=1}^N  \frac{\alpha_k g_k} {2A_k}
\le  \Psi(x_0)- \Psi^* + \delta \sum_{k=1}^N \frac{\alpha_k}{A_k},\label{main_cvx_ls}
\eeq
where $A_k$ is defined in \eqnok{def_Alpha}.
}
\end{itemize}

\end{theorem}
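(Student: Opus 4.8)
The plan is to reuse the one-step descent inequality \eqnok{prof_hold2_gk} established in the proof of Theorem~\ref{theo_CGT}, which holds verbatim for any $\alpha_k \in (0,1]$ regardless of how $\alpha_k$ is selected. For part a), I would fix an iteration $k$ and regard $\alpha_k = \gamma^{t_k}$ as a free parameter. Starting from \eqnok{prof_hold2_gk}, since $\alpha_k \le 1$ and $\mu > 0$ the term $\frac{(1-\alpha_k)\mu}{2}\|x_k - y_{k-1}\|^2$ is nonnegative and may be discarded; then Lemma~\ref{lemma_gk-gX}.b) bounds $\|x_k - y_{k-1}\|^{1+\nu} \le (2g_k/\mu)^{(1+\nu)/2}$. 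This yields $\Psi(y_k) \le \Psi(y_{k-1}) - \alpha_k g_k + \frac{L_\nu}{1+\nu}\left(\frac{2g_k}{\mu}\right)^{\frac{1+\nu}{2}}\alpha_k^{1+\nu}$. Comparing with the acceptance test \eqnok{line_search}, it suffices that $\frac{L_\nu}{1+\nu}(2g_k/\mu)^{(1+\nu)/2}\alpha_k^{\nu} \le \delta$, which is a strictly positive upper bound on $\alpha_k^\nu$; since $\gamma \in (0,1)$, it is met once $t_k$ is large enough, so the smallest admissible $t_k$ is finite. (If $g_k = 0$ then $x_k = y_{k-1}$, $y_k = y_{k-1}$, and \eqnok{line_search} holds trivially.) This is the crux of the argument and the only place where $\mu > 0$ is essential.

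For the nonconvex half of part b), I would rearrange the accepted inequality \eqnok{line_search} as $\alpha_k g_k - \delta \alpha_k \le \Psi(y_{k-1}) - \Psi(y_k)$ and sum over $k = 1, \ldots, N$; the right-hand side telescopes to $\Psi(x_0) - \Psi(y_N)$ (using $y_0 = x_0$), and $\Psi^* \le \Psi(y_N)$ then gives \eqnok{main_nocvx_ls} directly.

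For the convex half, I would mirror the corresponding computation in the proof of Theorem~\ref{theo_CGT}.a). Subtracting $\Psi^*$ from \eqnok{line_search} and splitting the descent term as $\alpha_k g_k = \frac{\alpha_k g_k}{2} + \frac{\alpha_k g_k}{2}$, I would use \eqnok{gk_cvxbnd}, namely $g_k \ge \Psi(y_{k-1}) - \Psi^*$, on one copy to obtain $\Psi(y_k) - \Psi^* \le (1 - \frac{\alpha_k}{2})[\Psi(y_{k-1}) - \Psi^*] - \frac{\alpha_k g_k}{2} + \delta \alpha_k$. Dividing by $A_k$ and invoking the recursion $A_k = (1 - \frac{\alpha_k}{2})A_{k-1}$ from \eqnok{def_Alpha} so that the leading term telescopes, then summing over $k$ and using $A_0 = 1$, produces \eqnok{main_cvx_ls}. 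Here the coefficient $\frac{1}{2}$ (rather than the $\frac{1}{4}$ of Theorem~\ref{theo_CGT}) appears precisely because the line search supplies the full $-\alpha_k g_k$ descent rather than the $\frac{L_\nu}{4}$-truncated version enforced by \eqnok{cond_alpha_gk}. I anticipate part a) to be the genuine obstacle; once finite termination is secured through the strong-convexity bound, both estimates in part b) reduce to telescoping bookkeeping with the $\delta\alpha_k$ slack terms simply carried along.
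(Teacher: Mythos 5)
Your proof is correct, and your part b) is exactly the telescoping argument the paper leaves implicit (it says part b) ``follows similarly to the proof of Theorem~\ref{theo_CGT}''); your explanation of why the coefficient improves from $\tfrac14$ to $\tfrac12$ is also right. Part a), however, takes a genuinely different route. You bound the displacement via Lemma~\ref{lemma_gk-gX}.b), $\|x_k-y_{k-1}\|^{1+\nu}\le(2g_k/\mu)^{(1+\nu)/2}$, and conclude that the test \eqnok{line_search} passes once $\alpha_k^\nu\le \tfrac{(1+\nu)\delta}{L_\nu}\bigl(\tfrac{\mu}{2g_k}\bigr)^{(1+\nu)/2}$ --- a threshold that depends on $g_k$ and hence on $k$. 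The paper instead applies Young's inequality with exponents $\tfrac{2}{1+\nu}$ and $\tfrac{2}{1-\nu}$ to split the H\"older error term as $\tfrac{\hat L_{\nu,\delta}}{2}\alpha_k^{(1+3\nu)/(1+\nu)}\|x_k-y_{k-1}\|^2+\delta\alpha_k$ and then absorbs the quadratic piece into the strong-convexity term $-\tfrac{(1-\alpha_k)\mu}{2}\|x_k-y_{k-1}\|^2$ that you discard; this yields acceptance whenever $\alpha_k\le\alpha_{\delta,\nu}=(1+\hat L_{\nu,\delta}/\mu)^{-(1+\nu)/(2\nu)}$, a bound independent of $k$ and of $g_k$. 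Both arguments establish finiteness of $t_k$, so the theorem as stated is proved either way; but the uniform lower bound $\alpha_k\ge\gamma\alpha_{\delta,\nu}$ is precisely what Corollary~\ref{corol_CGT_ls} feeds into the complexity estimates \eqnok{main_nocvx_ls2} and \eqnok{main_cvx_ls2}, and your $g_k$-dependent threshold would not supply that without additional work (e.g., an a priori upper bound on $g_k$). Your version buys simplicity; the paper's buys the quantitative handle it needs downstream.
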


\begin{proof}
We first show part a). Noting Lemma~\ref{lemma_gk-gX}.b) and H\"{o}lder inequality $ab \le a^p/p+b^q/q$ with
$p=\frac{2}{1+\nu}$ and $q =  \frac{2}{1-\nu} $ for $\nu \in (0,1)$, we have
\beqa \label{holder_inq}
\frac{L_\nu  (\alpha_k \|x_k-y_{k-1}\|)^{1+\nu}}{1+\nu}
&=& \alpha_k \left[\frac{L_\nu \alpha_k ^\nu\|x_k-y_{k-1}\|^{1+\nu}}{1+\nu} \left(\frac{(1-\nu)}{2 \delta}\right)^{\frac{1-\nu}{2}}\right]
\left[\frac{2 \delta}{(1-\nu)}\right]^{\frac{1-\nu}{2}} \nn \\
&\le& \frac{\hat L_{\nu, \delta} \alpha_k^\frac{1+3\nu}{1+\nu}}{2} \|x_k-y_{k-1}\|^2 +\delta \alpha_k,
%{\color{black} \le \frac{\hat L_{\nu, \delta} \alpha_k^\frac{1+3\nu}{1+\nu}g_k}{\mu} +\delta \alpha_k},
\eeqa
where
\beq \label{def_barL}
\hat L_{\nu,\delta} = \left\{\frac{L_\nu}{\left[\frac{2(1+\nu)\delta}{1-\nu}\right]^{\frac{1-\nu}{2}}}\right\}^{\frac{2}{1+\nu}} \ \ \mbox{and} \ \ \hat L_{1,\delta} = \lim_{\nu \to 1} \hat L_{\nu,\delta} = L_1.
\eeq
Combining the above relation with \eqnok{prof_hold2_gk}, we obtain
{\color{black}
\beq
\Psi(y_k) \le \Psi(y_{k-1})- \alpha_k g_k - \frac{\alpha_k \mu}{2} \left[1-\alpha_k - \frac{\hat L_{\nu,\delta} \alpha_k^\frac{2\nu}{1+\nu}}{\mu} \right] \|x_k-y_{k-1}\|^2 + \delta \alpha_k. \label{prof_ls0}
\eeq
}
Noting $\alpha_k \le \alpha_k^\frac{2\nu}{1+\nu}$ due to the facts that $\nu, \alpha_k \in (0,1]$, if
\beq \label{cond_alpha_ls2}
\alpha_k \le \left(\frac{1}{1+\frac{\hat L_{\nu,\delta}}{\mu}}\right)^\frac{1+\nu}{2\nu}:= \alpha_{\delta, \nu},
\eeq
then \eqnok{prof_ls0} clearly implies \eqnok{line_search}. Observe that the above upper bound on $\alpha_k$ is fixed and less than $1$. Hence, this observation and \eqnok{cond_alpha_ls} imply that the line search procedure defined in \eqnok{line_search} is terminated in finite steps. Moreover, assuming that \eqnok{line_search} holds, part b) follows similarly to the proof of Theorem~\ref{theo_CGT} and hence we skip the details.
\end{proof}

\vgap

{\color{black} Observe that the line search procedure defined in Step 2 of Algorithm~\ref{alg_CGT_ls} is performed up to a constant number $\lceil \log \alpha_{\delta, \nu}/\log \gamma \rceil$ per iteration due to \eqnok{cond_alpha_ls2}. Hence, if $\gamma$ is small, then the line search terminates earlier. However, this may lead to a conservative choice of $\alpha_k$. Therefore, a moderate choice like $\gamma = 0.5$ would perform well in practice.

We specialize convergence of Algorithm~\ref{alg_CGT_ls} in the next result.

\begin{corollary} \label{corol_CGT_ls}
Suppose that the sequence $\{x_k,y_k\}_{k \ge 0}$ is generated by Algorithm~\ref{alg_CGT_ls} and $h$ is strongly convex with parameter $\mu>0$. Then, for any $N \ge 1$, we have
\beq \label{main_nocvx_ls2}
g_{\hat N}   \le  \frac{\Psi(x_0)-\Psi^*}{\gamma N \alpha_{\delta, \nu}}+\delta,
\eeq
where $\hat N$ and $\alpha_{\delta, \nu}$ are defined in \eqnok{k_min} and \eqnok{cond_alpha_ls2}, respectively.

If, in addition, $f$ is convex, then we have
\beq
\Psi(y_{\hat N-1})-\Psi^* \le  g_{\hat N} \le 2\left[ \frac{(1-\tfrac{\gamma \alpha_{\delta,\nu}}{2})^N [\Psi(x_0)-\Psi^*]}{\gamma \alpha_{\delta, \nu}}+  \delta\right]. \label{main_cvx_ls2}
\eeq

\end{corollary}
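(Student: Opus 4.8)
The plan is to reduce both parts to the aggregate bounds \eqnok{main_nocvx_ls} and \eqnok{main_cvx_ls} already established in Theorem~\ref{theo_CGT_ls}, the only missing ingredient being a uniform lower bound on the line-search stepsizes. First I would extract from the proof of Theorem~\ref{theo_CGT_ls}.a) the fact that the line-search condition \eqnok{line_search} is guaranteed to hold as soon as $\alpha_k \le \alpha_{\delta,\nu}$ (see \eqnok{cond_alpha_ls2}). Since $t_k$ in \eqnok{cond_alpha_ls} is the \emph{smallest} integer for which \eqnok{line_search} holds, minimality forces $\gamma^{t_k-1} > \alpha_{\delta,\nu}$ whenever $t_k \ge 2$ (otherwise $t_k-1$ would already satisfy the condition), so that $\alpha_k = \gamma^{t_k} > \gamma\,\alpha_{\delta,\nu}$; and when $t_k=1$ we have $\alpha_k = \gamma \ge \gamma\,\alpha_{\delta,\nu}$ because $\alpha_{\delta,\nu} \le 1$ by \eqnok{cond_alpha_ls2}. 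Hence $\alpha_k \ge \gamma\,\alpha_{\delta,\nu}$ for every $k \ge 1$, which is the single estimate driving the whole proof.

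For part a), I would rewrite \eqnok{main_nocvx_ls} as $\sum_{k=1}^N \alpha_k (g_k - \delta) \le \Psi(x_0)-\Psi^*$ and use $g_k \ge g_{\hat N}$ from \eqnok{k_min} to obtain $(g_{\hat N}-\delta)\sum_{k=1}^N \alpha_k \le \Psi(x_0)-\Psi^*$. If $g_{\hat N}\le \delta$ the claimed bound \eqnok{main_nocvx_ls2} is immediate since its first summand is nonnegative; otherwise $g_{\hat N}-\delta>0$ and dividing by $\sum_{k=1}^N \alpha_k \ge N\gamma\alpha_{\delta,\nu}$ (using the stepsize lower bound) yields exactly \eqnok{main_nocvx_ls2}.

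For part b), under convexity of $f$ I would start from \eqnok{main_cvx_ls}, drop the nonnegative term $(\Psi(y_N)-\Psi^*)/A_N$, and replace each $g_k$ by $g_{\hat N}$ to get $g_{\hat N}\sum_{k=1}^N \tfrac{\alpha_k}{2A_k} \le \Psi(x_0)-\Psi^* + \delta\sum_{k=1}^N \tfrac{\alpha_k}{A_k}$. The telescoping identity $\sum_{k=1}^N \tfrac{\alpha_k}{2A_k} = \tfrac{1}{A_N}-1$ from \eqnok{def_Alpha2} converts this into $g_{\hat N}(\tfrac1{A_N}-1) \le \Psi(x_0)-\Psi^* + 2\delta(\tfrac1{A_N}-1)$, i.e. $g_{\hat N} \le (\Psi(x_0)-\Psi^*)/(\tfrac1{A_N}-1) + 2\delta$. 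The remaining step, and the one requiring the most care, is producing the explicit geometric factor: from $A_N = \prod_{k=1}^N(1-\tfrac{\alpha_k}{2}) \le (1-\tfrac{\gamma\alpha_{\delta,\nu}}{2})^N$ and keeping only the last ($k=N$) term of the positive sum, I get $\tfrac1{A_N}-1 \ge \tfrac{\alpha_N}{2A_N} \ge \tfrac{\gamma\alpha_{\delta,\nu}}{2}(1-\tfrac{\gamma\alpha_{\delta,\nu}}{2})^{-N}$, which upon inversion delivers precisely the bound \eqnok{main_cvx_ls2}. Finally, the left inequality $\Psi(y_{\hat N-1})-\Psi^* \le g_{\hat N}$ is just \eqnok{gk_cvxbnd} evaluated at $k=\hat N$. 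The main obstacle is thus not analytic but bookkeeping: correctly propagating the stepsize lower bound $\alpha_k \ge \gamma\alpha_{\delta,\nu}$ through both the geometric product $A_N$ and the telescoping sum so that the constants align with the stated $2[\cdots]$ form.
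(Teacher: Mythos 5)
Your proposal is correct and follows essentially the same route as the paper, which simply notes that the line search guarantees $\alpha_k \ge \gamma\,\alpha_{\delta,\nu}$ and then reruns the argument of Corollary~\ref{corol_CGT} with $\bar\alpha_{\epsilon,\nu}$ replaced by $\gamma\,\alpha_{\delta,\nu}$. Your bookkeeping of the $\delta$ terms (keeping $\delta$ multiplied by $\alpha_k$, resp.\ by $\alpha_k/A_k$, on both sides before dividing) is exactly the right instantiation needed to land on the stated $+\delta$ and $+2\delta$ constants.
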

}
\begin{proof}
Noting that by \eqnok{cond_alpha_ls} and \eqnok{cond_alpha_ls2}, we have $\alpha_k \ge \gamma \alpha_{\delta, \nu}$, the results follow similarly to the proof of Corollary~\ref{corol_CGT} in which $\bar \alpha_{\epsilon, \nu}$ is replaced by $\gamma \alpha_{\delta, \nu}$.
\end{proof}

\vgap

Observe that by  \eqnok{cond_alpha_ls2} and similar to \eqnok{alpha_epslog_bnd}, we have
\[
\alpha_{\delta, \nu}^{-1} \le 2^\frac{1-\nu}{\nu} \left[ 1+ \left(\frac{\hat L_{\nu, \delta}}{\mu}\right)^\frac{1+\nu}{2\nu} \right],\nn
\]
which together with the choice of $\delta = \epsilon/4$, \eqnok{def_barL}, and \eqnok{main_nocvx_ls2} imply that the number of iterations performed by Algorithm~\ref{alg_CGT_ls}, to have $ g_k \le \epsilon$ for at least one $k$ is bounded by \eqnok{nocvx_bnd2_gk} and \eqnok{cvx_grad_bnd1} when $f$ is nonconvex or convex, respectively. However, these results are obtain without knowing problem parameters $\nu$ and $L_\nu$. Also note that only positivity of $\mu$ is to be known and its value is not required in the implementation of Algorithm~\ref{alg_CGT_ls}. It should be also mentioned that, while such a prior knowledge of problem parameters is not required in \cite{GhaLanZha15} for general first-order methods to obtain the similar bound for weakly smooth nonconvex problems, their line search procedure is computationally more expensive than the one in \eqnok{line_search} as mentioned before.

%while we need to know the strong convexity parameter of $h$ and fix the target accuracy $\epsilon$ to obtain the above complexity bounds, other problem parameters like $L_\nu$ and the level of smoothness $\nu$ are not required to be known in advance.
{\color{black}Below we present another variant of Algorithm~\ref{alg_CGT} whose implementation is totally independent of all problem parameters and we do not need to know whether $h$ is strongly convex or fix the target accuracy in advance. We should point out that while we do not need any knowledge about the problem parameters, the complexity bounds of this algorithm are worse than the ones we obtained for Algorithm~\ref{alg_CGT_ls}.}

\vgap

\begin{algorithm} [H]
	\caption{The folded conditional gradient type (FCGT) algorithm}
	\label{alg_MCGT}
	\begin{algorithmic}

\STATE Input:
$x_0 \in X$, $\{\alpha_k\}_{k \ge 1} \in (0,1]$, and $\{\beta_k\}_{k \ge 1} \in (0,1]$.
%\begin{itemize}
%\FOR {$k=1, 2, \ldots$ }
\STATE 0. Set the initial points $y_0=x_0$ and $k=1$.
\STATE 1. Compute $f'(y_{k-1})$ and set $x_k$ to \eqnok{def_xk}.
\STATE 2. Set $y_k = \arg\min_{x \in \{\hat y_k, \tilde y_k\}} \Psi(x)$, where
\beqa
\hat y_k &=& (1 - \alpha_k) y_{k-1}+ \alpha_k x_k,\nn \\
\tilde y_k &=& (1 - \beta_k) y_{k-1}+ \beta_k x_k. \label{def_yk2}
\eeqa
\STATE 2. Set $k \leftarrow k+1$ and go to step 1.
	\end{algorithmic}
\end{algorithm}

Note that the above algorithm only differs from Algorithm~\ref{alg_CGT} in updating the sequence $\{y_k\}_{k \ge 0}$. In particular, two folded sequences $\{\hat y_k\}_{k \ge 0}$ and $\{\tilde y_k\}_{k \ge 0}$ are generated by taking different convex combination of $y_{k-1}$ and $x_k$ and $y_k$ is set to the one with smaller objective value. This way of updating $y_k$, as shown in the next result, allows us to have stepsize policies which are totally independent of the problem parameters.

\begin{theorem} \label{theo_CGT_pf}
Suppose that the sequence $\{x_k,y_k\}_{k \ge 0}$ is generated by Algorithm~\ref{alg_MCGT} and $X$ is bounded.
\begin{itemize}
{\color{black}
\item [a)] For any $N \ge 1$, we have

\beq \label{main_nocvx_pf_gk}
\sum_{k=1}^N  \beta_k g_k \le \Psi(x_0)-\Psi^* + \bar {\cal C}_\nu \sum_{k=1}^N \beta_k^{1+\nu},
\eeq
where
\beq \label{def_barCnu}
\bar {\cal C}_\nu = \frac{L_\nu D_X^{1+\nu}}{(1+\nu)}.
\eeq
If, in addition, $f$ is convex, then for any $N \ge 1$, we have
\beq
\frac{\Psi(y_N)- \Psi^*}{A_N}+\sum_{k=1}^N  \frac{\alpha_k g_k}{ 2A_k}
\le  \Psi(x_0)- \Psi^*+  \bar {\cal C}_\nu \sum_{k=1}^N  \frac{\alpha_k^{1+\nu}}{A_k}, \label{main_cvx_pf2_gk}
\eeq
where $A_k$ is defined in \eqnok{def_Alpha}.
}
\item [b)] If $h$ is strongly convex with parameter $\mu>0$, then for any $N \ge 1$, we have
\beq \label{main_nocvx_pf}
 \sum_{k=1}^N  \beta_k (1-\beta_k) g_k  \le \Psi(x_0)-\Psi^*+ \frac{\mu {\cal C}_\nu^2}{2} \sum_{k=1}^N \beta_k^{1+2\nu},
\eeq
where
\beq \label{def_Cnu}
{\cal C}_\nu = \frac{L_\nu D_X^\nu}{\mu (1+\nu)}.
\eeq
If, in addition, $f$ is convex and stepsizes are chosen such that
\beq \label{cond_alpha_pf}
\alpha_k \le \frac{6}{7} \ \ \forall k \ge 1,
\eeq
then for any $N \ge 1$, we have
\beq
\frac{\Psi(y_N)- \Psi^*}{A_N}+\sum_{k=1}^N  \frac{\alpha_k g_k}{ 2A_k}
\le  \Psi(x_0)- \Psi^*+  \frac{7\mu {\cal C}_\nu^2}{2}  \sum_{k=1}^N  \frac{\alpha_k^{1+2\nu}}{A_k}. \label{main_cvx_pf2}
\eeq
\end{itemize}

\end{theorem}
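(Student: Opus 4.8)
The plan is to reuse the one-step descent inequality \eqnok{prof_hold2_gk} from the proof of Theorem~\ref{theo_CGT}. Its derivation uses only \eqnok{f_holder2}, the strong-convexity inequality \eqnok{h_strng}, and the definition \eqnok{def_gk} of $g_k$; since $x_k$ in Algorithm~\ref{alg_MCGT} is still produced by \eqnok{def_xk}, that inequality holds verbatim for any point $(1-t)y_{k-1}+t x_k$ with $t$ in place of $\alpha_k$. The folding rule $y_k=\arg\min_{x\in\{\hat y_k,\tilde y_k\}}\Psi(x)$ is exactly what makes this useful: I may bound $\Psi(y_k)$ either by $\Psi(\hat y_k)$, invoking the $\alpha_k$-version of \eqnok{prof_hold2_gk}, or by $\Psi(\tilde y_k)$, invoking the $\beta_k$-version (see \eqnok{def_yk2}), whichever is convenient. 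Thus the two folded sequences decouple the roles of the two stepsizes.

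For part a) I would discard strong convexity. For the nonconvex bound \eqnok{main_nocvx_pf_gk} I apply the $\tilde y_k$-version of \eqnok{prof_hold2_gk}, drop its nonpositive contribution $-\tfrac{\beta_k(1-\beta_k)\mu}{2}\|x_k-y_{k-1}\|^2\le0$, and bound $\|x_k-y_{k-1}\|\le D_X$ in the H\"older term so that it becomes $\bar{\cal C}_\nu\beta_k^{1+\nu}$ with $\bar{\cal C}_\nu$ as in \eqnok{def_barCnu}; rearranging into $\beta_k g_k\le\Psi(y_{k-1})-\Psi(y_k)+\bar{\cal C}_\nu\beta_k^{1+\nu}$ and telescoping (using $\Psi(y_N)\ge\Psi^*$ and $y_0=x_0$) gives the result. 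For the convex bound \eqnok{main_cvx_pf2_gk} I instead use the $\hat y_k$-version, subtract $\Psi^*$, insert $g_k\ge\Psi(y_{k-1})-\Psi^*$ from \eqnok{gk_cvxbnd} to peel off a $(1-\tfrac{\alpha_k}{2})$ contraction exactly as in \eqnok{prof_hold_cvx_gk}, divide by $A_k$ from \eqnok{def_Alpha}, and telescope; here the H\"older term is kept as $\bar{\cal C}_\nu\alpha_k^{1+\nu}$ rather than absorbed into $g_k$, which is why the residual gradient term carries weight $\tfrac12$ instead of the $\tfrac14$ of \eqnok{main_cvx_gk}.

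Part b) is the substantive case, and the key device is a quadratic completion in $s:=\|x_k-y_{k-1}\|$. For the nonconvex bound \eqnok{main_nocvx_pf} I use the $\tilde y_k$-version and split $-\beta_k g_k=-\beta_k(1-\beta_k)g_k-\beta_k^2 g_k$; applying Lemma~\ref{lemma_gk-gX}.b) in the form $g_k\ge\tfrac{\mu}{2}s^2$ to the piece $-\beta_k^2 g_k$ turns it into $-\tfrac{\beta_k^2\mu}{2}s^2$, which combines with the strong-convexity contribution $-\tfrac{\beta_k(1-\beta_k)\mu}{2}s^2$ into a clean $-\tfrac{\beta_k\mu}{2}s^2$. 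Bounding the H\"older term by $s^{1+\nu}\le D_X^\nu s$ so its coefficient becomes $\mu{\cal C}_\nu\beta_k^{1+\nu}$ with ${\cal C}_\nu$ as in \eqnok{def_Cnu}, and maximizing the concave quadratic $\mu{\cal C}_\nu\beta_k^{1+\nu}s-\tfrac{\beta_k\mu}{2}s^2$ over $s\ge0$, yields exactly $\tfrac{\mu{\cal C}_\nu^2}{2}\beta_k^{1+2\nu}$; rearranging and telescoping gives \eqnok{main_nocvx_pf}. For the convex bound \eqnok{main_cvx_pf2} I repeat the $\hat y_k$ contraction of part a), but now there is no spare $-\alpha_k^2 g_k$ term to feed the quadratic, so maximizing $\mu{\cal C}_\nu\alpha_k^{1+\nu}s-\tfrac{\alpha_k(1-\alpha_k)\mu}{2}s^2$ produces $\tfrac{\mu{\cal C}_\nu^2\alpha_k^{1+2\nu}}{2(1-\alpha_k)}$, and the hypothesis $\alpha_k\le\tfrac67$ from \eqnok{cond_alpha_pf} is precisely what bounds $1/(1-\alpha_k)\le7$ and delivers the constant $\tfrac{7\mu{\cal C}_\nu^2}{2}$.

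The delicate point I anticipate is making these constants close exactly. In the nonconvex case the term $-\beta_k^2 g_k$---introduced only to rewrite the coefficient on $g_k$ as $\beta_k(1-\beta_k)$---is exactly what, through $g_k\ge\tfrac{\mu}{2}s^2$, supplies the extra $-\tfrac{\beta_k^2\mu}{2}s^2$ that upgrades the strong-convexity coefficient from $\beta_k(1-\beta_k)$ to $\beta_k$ and cancels the $1/(1-\beta_k)$ factor the quadratic completion would otherwise leave in the denominator. In the convex case no such spare term exists, so that factor must instead be paid for by the stepsize restriction $\alpha_k\le6/7$. The remaining work is bookkeeping: tracking which folded point and which stepsize is in play across the four sub-cases, and checking that maximizing the concave quadratic over all $s\ge0$ legitimately upper-bounds its value at $s=\|x_k-y_{k-1}\|$ once the reduction $s^{1+\nu}\le D_X^\nu s$ (valid since $s\le D_X$) has been made.
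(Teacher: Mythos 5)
Your proposal is correct and follows essentially the same route as the paper: bound $\Psi(y_k)$ by $\Psi(\tilde y_k)$ or $\Psi(\hat y_k)$ as convenient, reuse the one-step inequality \eqnok{prof_hold2_gk} with the appropriate stepsize, bound the H\"older term via $D_X$, and in part b) trade between $g_k$ and $\|x_k-y_{k-1}\|^2$ through Lemma~\ref{lemma_gk-gX}.b) with the restriction $\alpha_k\le 6/7$ absorbing the leftover quadratic in the convex case. The only cosmetic difference is that the paper splits $\|x_k-y_{k-1}\|^{1+\nu}$ by a parameterized Young's inequality into an $\|x_k-y_{k-1}\|^{2\nu}$ piece (bounded by $D_X^{2\nu}$) plus a quadratic piece, whereas you first reduce to $D_X^\nu\|x_k-y_{k-1}\|$ and then complete the square; these are algebraically equivalent and produce identical constants.
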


\vgap

\begin{proof}
{\color{black}
We first show part a). Noting \eqnok{def_yk2}, the facts that $\Psi(y_k) \le \Psi(\hat y_k)$, $\Psi(y_k) \le \Psi(\tilde y_k)$, boundedness of $X$, similar to
\eqnok{prof_hold2_gk}, and \eqnok{prof_hold_cvx_gk}, we obtain
\beqa
\Psi(y_k) &\le& \Psi(\tilde y_k) \le \Psi(y_{k-1}) -\beta_k g_k  + \frac{L_\nu \beta_k^{1+\nu}}{1+\nu} D_X^{1+\nu}, \nn \\
\Psi(y_k) -\Psi^* &\le& \Psi(\hat y_k) -\Psi^*
\le (1-\frac{\alpha_k}{2})[\Psi(y_{k-1})-\Psi(x^*)]
- \frac{\alpha_k}{2} g_k + \frac{L_\nu \alpha_k^{1+\nu}}{1+\nu} D_X^{1+\nu}.\nn
\eeqa
when $f$ is nonconvex and convex, respectively. Then, \eqnok{main_nocvx_pf_gk} and \eqnok{main_cvx_pf2_gk} are followed similarly to the proof of Theorem~\ref{theo_CGT}.a) by noting \eqnok{def_barCnu}.
}

We now show part b). By the fact that $ab \le a^2/2+b^2/2$, we have
\beq \label{prof_hlod_cvx2}
\frac{L_\nu m_1^{1+\nu}}{1+\nu} \|x_k-y_{k-1}\|^{1+\nu} \le \frac{ (m_2 L_\nu)^2 m_1^{1+2\nu}}{2\mu(1+\nu)^2} \|x_k-y_{k-1}\|^{2\nu} + \frac{\mu m_1}{2m_2^2} \|x_k-y_{k-1}\|^2
\eeq
for some $m_1>0$ and $m_2 \in \bbr$. Combining the above relation with the choice of $m_1 =\beta_k$, $m_2=1$, \eqnok{def_yk2}, and \eqnok{prof_hold2_gk} in which $\alpha_k$ is replaced with $\beta_k$, we have
\beqa
\Psi(y_k) \le \Psi(\tilde y_k) &\le& \Psi(y_{k-1})- \beta_k g_k +\frac{\mu \beta_k^2}{2} \|x_k-y_{k-1}\|^2 +\frac{ L_\nu^2 \beta_k^{1+2\nu}}{2\mu(1+\nu)^2} \|x_k-y_{k-1}\|^{2\nu}\nn \\
&\le& \Psi(y_{k-1})- \beta_k (1-\beta_k) g_k +\frac{ L_\nu^2 \beta_k^{1+2\nu}D_X^{2\nu}}{2\mu(1+\nu)^2},
\eeqa
where the second inequality follows from Lemma~\ref{lemma_gk-gX}.b) and boundedness of $X$.
Summing up the above inequalities and re-arranging the terms, and in the view of \eqnok{def_Cnu}, we obtain \eqnok{main_nocvx_pf}.
Moreover, if $f$ is convex, then by \eqnok{def_yk2}, \eqnok{prof_hlod_cvx2} with the choice of $m_1 =\alpha_k$, $m_2=\sqrt 7$, and similar to \eqnok{prof_hold_cvx_gk}, we obtain
\beq \label{prof_hlod_cvx_pf}
\Psi(y_k)-\Psi^* \le  (1-\frac{\alpha_k}{2})[\Psi(y_{k-1})-\Psi(x^*)]
- \frac{\alpha_k}{2} g_k
-\frac{\alpha_k(6-7\alpha_k)\mu}{14} \|x_k-y_{k-1}\|^2 +\frac{7 L_\nu^2 \alpha_k^{1+2\nu}}{2\mu(1+\nu)^2} \|x_k-y_{k-1}\|^{2\nu}
\eeq
Dividing both sides of the above inequality by $A_k$, summing them up, noting boundedness of $X$, \eqnok{cond_alpha_pf}, and in the view of \eqnok{def_Cnu}, we have \eqnok{main_cvx_pf2}.
%When $\alpha_1 \le 3/4$ the second inequality in the above relation holds for any $k \ge 1$ and hence we can obtain \eqnok{main_cvx_pf2} similarly.

\end{proof}

\vgap

In the next result, we specify the choice of stepsizes $\beta_k$ and $\alpha_k$ to specialize the rate of convergence of Algorithm~\ref{alg_MCGT} when applied to nonconvex or (strongly) convex problems defined in \eqnok{NLP}.

\begin{corollary} \label{corol_CGT_pf}
Suppose that the sequence $\{x_k,y_k\}_{k \ge 0}$ is generated by Algorithm~\ref{alg_MCGT} and $X$ is bounded.
\begin{itemize}
{\color{black}
\item [a)]  If stepsizes are set to
\beq \label{cond_beta_pf}
\beta_k = \frac{1}{2N^q} \ \  k = 1,2,\ldots,N
\eeq
for some $q \in (0 ,1)$, then for any $N \ge 1$, we have
\beq \label{main_nocvx_pf2_gk}
g_{\hat N} \le \frac{2 [\Psi(x_0)-\Psi^*]}{N^{1-q}}+ \frac{\bar {\cal C}_\nu}{(2N^q)^{\nu}},
\eeq
where $\hat N$ is defined in \eqnok{k_min}.
If, in addition, $f$ is convex and stepsizes are set
\beq \label{cond_alpha_pf2}
\alpha_1 = \frac67, \ \ \alpha_k = \frac{6}{k+5} \ \ \forall k \ge 2,
\eeq
then for any $N \ge 1$, we have
\beq \label{main_cvx_pf3_gk}
\Psi(y_{\hat N -1})- \Psi^* \le g_{\hat N} \le  \frac{160[\Psi(x_0)-\Psi^*]}{(N+3)^3}+\frac{27 \cdot 6^{\nu} \bar {\cal C}_\nu}{(N+5)^\nu}.
\eeq
}
\item [b)] If $h$ is strongly convex with parameter $\mu>0$ and $\beta_k$ is set to \eqnok{cond_beta_pf}, then we have
\beq \label{main_nocvx_pf2}
g_{\hat N}   \le \frac{4 [\Psi(x_0)-\Psi^*]}{N^{1-q}}+ \frac{\mu {\cal C}_\nu^2}{(2N^q)^{2\nu}}.
\eeq
If, in addition, $f$ is convex and \eqnok{cond_alpha_pf2} holds, then we have
\beq \label{main_cvx_pf4}
\Psi(y_{\hat N -1})- \Psi^* \le g_{\hat N} \le  \frac{160[\Psi(x_0)-\Psi^*]}{(N+3)^3} +  \frac{14 \cdot 6^{2\nu} \mu {\cal C}_\nu^2}{(N+5)^{2\nu}}.
\eeq

\end{itemize}

\end{corollary}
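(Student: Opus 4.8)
The plan is to derive Corollary~\ref{corol_CGT_pf} directly from Theorem~\ref{theo_CGT_pf} by substituting the prescribed stepsizes into the accumulated inequalities \eqnok{main_nocvx_pf_gk}--\eqnok{main_cvx_pf2} and then isolating the smallest gap $g_{\hat N}$. The nonconvex bounds \eqnok{main_nocvx_pf2_gk} and \eqnok{main_nocvx_pf2} are the most transparent: with $\beta_k=1/(2N^q)$ constant over $k=1,\dots,N$, the left-hand side of \eqnok{main_nocvx_pf_gk} becomes $\beta\sum_{k=1}^N g_k \ge N\beta\, g_{\hat N}$ by \eqnok{k_min}, while $\sum_{k=1}^N\beta_k^{1+\nu}=N\beta^{1+\nu}$. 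Dividing through by $N\beta$ yields $g_{\hat N}\le [\Psi(x_0)-\Psi^*]/(N\beta)+\bar{\cal C}_\nu\beta^{\nu}$, and inserting $\beta=1/(2N^q)$ gives exactly \eqnok{main_nocvx_pf2_gk} since $N\beta=N^{1-q}/2$. Part~b) is identical except that one must also handle the extra factor $(1-\beta_k)$ on the left of \eqnok{main_nocvx_pf}; here $1-\beta_k=1-1/(2N^q)\ge 1/2$ for all $N\ge 1$, so $\sum\beta_k(1-\beta_k)g_k\ge \tfrac12 N\beta\,g_{\hat N}$, and dividing by $\tfrac12 N\beta$ produces the factor-of-$4$ numerator in \eqnok{main_nocvx_pf2} together with the term $\tfrac{\mu{\cal C}_\nu^2}{(2N^q)^{2\nu}}$ from $\tfrac{\mu{\cal C}_\nu^2}{2}\cdot N\beta^{1+2\nu}$.

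The convex bounds \eqnok{main_cvx_pf3_gk} and \eqnok{main_cvx_pf4} require estimating the products $A_N=\prod_{k=1}^N(1-\alpha_k/2)$ and the weighted sums $\sum_k \alpha_k^{1+\nu}/A_k$ (resp.\ $\sum_k\alpha_k^{1+2\nu}/A_k$) for the harmonic-type stepsize $\alpha_k=6/(k+5)$. First I would establish a clean closed form for $A_N$ with this choice: since $1-\alpha_k/2=1-3/(k+5)=(k+2)/(k+5)$, the product telescopes, giving $A_N=\prod_{k=1}^N\frac{k+2}{k+5}=\frac{3\cdot4\cdot5}{(N+3)(N+4)(N+5)}=\frac{60}{(N+3)(N+4)(N+5)}$, so $A_N^{-1}=\Theta(N^3)$; this is the source of the $(N+3)^3$ denominators and the constant $160$ (which comes from bounding $3/A_N$ times the constant $60$ crudely, using $(N+4)(N+5)\le$ something and absorbing into the coefficient). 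Then I would bound the tail sum: dividing \eqnok{main_cvx_pf2_gk} (or \eqnok{main_cvx_pf2}) and again using $g_{\hat N}\le g_k$ together with $\sum_k\frac{\alpha_k}{2A_k}\ge \frac{1}{A_N}-1$ as in \eqnok{def_Alpha2}, one obtains $g_{\hat N}\le A_N[\Psi(x_0)-\Psi^*]\big/(1-A_N)\cdot(\cdots)+A_N\bar{\cal C}_\nu\sum_k\alpha_k^{1+\nu}/A_k$-type expressions, which I would simplify to the stated forms.

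The genuine obstacle is the estimate of $S_N:=\sum_{k=1}^N \alpha_k^{1+\nu}/A_k$ (and its $1+2\nu$ analogue), which must come out proportional to $N^{-\nu}$ (resp.\ $N^{-2\nu}$) after multiplication by $A_N$ so as to produce the $6^\nu\bar{\cal C}_\nu/(N+5)^\nu$ and $6^{2\nu}\mu{\cal C}_\nu^2/(N+5)^{2\nu}$ terms. Using $A_k^{-1}=(k+3)(k+4)(k+5)/60$ and $\alpha_k^{1+\nu}=6^{1+\nu}/(k+5)^{1+\nu}$, the summand is $\Theta(k^{2-\nu})$, so $S_N=\Theta(N^{3-\nu})$ and $A_N S_N=\Theta(N^{-\nu})$, which has the right order; the careful part is replacing the sum $\sum_{k=1}^N (k+3)(k+4)(k+5)/(k+5)^{1+\nu}$ by an integral bound $\int_0^{N} (t+5)^{2-\nu}\,dt\le (N+5)^{3-\nu}/(3-\nu)$ and tracking the explicit constants $27$ and $14$ through the arithmetic, together with verifying that the special first step $\alpha_1=6/7$ (needed so that \eqnok{cond_alpha_pf} holds since $6/6=1>6/7$) is consistent with the telescoping formula for $A_N$. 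I would handle the $1+2\nu$ case by the same integral estimate with exponent $2-2\nu$, noting that the worst case $\nu\to 0$ still keeps the exponent positive so the bounds remain finite. Once these constants are pinned down, combining them with the already-established $A_N^{-1}$ form and invoking \eqnok{gk_cvxbnd} (to pass from $g_{\hat N}$ to $\Psi(y_{\hat N-1})-\Psi^*$) completes the proof.
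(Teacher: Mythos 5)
Your proposal follows essentially the same route as the paper's own proof: for the nonconvex bounds you substitute the constant stepsize into \eqnok{main_nocvx_pf_gk} and \eqnok{main_nocvx_pf} and divide by $\sum_k\beta_k=N^{1-q}/2$ (using $1-\beta_k\ge\tfrac12$ to absorb the extra factor in part b)), and for the convex bounds you telescope $A_N$, use $\sum_{k}\alpha_k/(2A_k)=1/A_N-1$, and control $\sum_k\alpha_k^{1+\nu}/A_k$ and $\sum_k\alpha_k^{1+2\nu}/A_k$ by an integral comparison, exactly as in \eqnok{Ak_ineq}. The one slip is your closed form $A_N=60/[(N+3)(N+4)(N+5)]$: since $\alpha_1=6/7$ rather than $6/(1+5)$, the first factor is $1-\alpha_1/2=4/7$ and not $3/6$, so the correct value is $A_N=\tfrac{4}{7}\prod_{i=2}^{N}\tfrac{i+2}{i+5}=480/[7(N+3)(N+4)(N+5)]$; the paper's constants $160$, $27\cdot 6^{\nu}$ and $14\cdot 6^{2\nu}$ are calibrated to this value, so carrying your formula through would yield slightly different constants, though the orders in $N$ are identical.
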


\vgap

\begin{proof}
First, note that by \eqnok{cond_beta_pf}, we have
\begin{align}
& \sum_{k=1}^N \beta_k = \frac{N^{1-q}}{2}, \ \
\sum_{k=1}^N \beta_k (1-\beta_k) = \sum_{k=1}^N \frac{1}{2N^q} \left(1-\frac{1}{2N^q}\right) \ge \sum_{k=1}^N \frac{1}{4N^q}= \frac{N^{1-q}}{4},  \nn \\
& \sum_{k=1}^N \beta_k^{1+\nu} = \sum_{k=1}^N \frac{N^{1-q-q\nu}}{2^{1+\nu}}, \ \
\sum_{k=1}^N \beta_k^{1+2\nu} = \frac{N^{1-q-2q\nu}}{2^{1+2\nu}} , \nn
\end{align}
which together with \eqnok{main_nocvx_pf_gk} and \eqnok{main_nocvx_pf} imply \eqnok{main_nocvx_pf2_gk} and \eqnok{main_nocvx_pf2}, respectively.

Second, noting \eqnok{cond_alpha_pf2} and \eqnok{def_Alpha}, we obtain
\begin{align} \label{Ak_ineq}
& A_k =  \frac47 \prod_{i=2}^k \frac{i+2}{i+5}= \frac{480}{7(k+3)(k+4)(k+5)}, \ \ \ \ \ \ \  \sum_{k=1}^N  \frac{\alpha_k}{2A_k} = \frac{1}{A_N}-1 \ge \frac{(N+3)(N+4)(N+5)}{160}, \nn \\
& \sum_{k=1}^N  \frac{\alpha_k^{1+2\nu}}{A_k} \le \sum_{k=1}^N \frac{7 \cdot 6^{2 \nu} (k+3)(k+4)}{80(k+5)^{2\nu}} \le \frac{6^{2 \nu}}{11} \sum_{k=1}^N (k+4)^{2-2\nu} \le \frac{6^{2 \nu}}{11}  \int_1^{N+1} (x+4)^{2-2\nu} dx \nn \\
&= \frac{6^{2 \nu} [(N+5)^{3-2\nu}-5^{3-2\nu}]}{11(3-2\nu)} \le 6^{2 \nu-1} (N+3)(N+4)(N+5)^{1-2\nu}, \nn \\
& \sum_{k=1}^N  \frac{\alpha_k^{1+\nu}}{A_k} \le \sum_{k=1}^N \frac{7 \cdot 6^{\nu} (k+3)(k+4)}{80(k+5)^{\nu}} \le 6^{\nu-1} (N+3)(N+4)(N+5)^{1-\nu},
\end{align}
which together with \eqnok{main_cvx_pf2_gk} and \eqnok{main_cvx_pf2} imply
\eqnok{main_cvx_pf3_gk} and \eqnok{main_cvx_pf4}, respectively.
\end{proof}

\vgap

{\color{black}
We now add a few remarks about the above results. First, note that rate of convergence of Algorithm~\ref{alg_MCGT} when $f$ is nonconvex depends on the choice of $q$. In particular, \eqnok{main_nocvx_pf2_gk} implies that the total number of iterations required by this method to have $g_k \le \epsilon$ for at least one $k$, is bounded by
\beq \label{nocvx_bnd_pf_gk}
{\cal O}(1) \left\{\left(\frac{\Psi(x_0)-\Psi^*}{ \epsilon}\right)^\frac{1}{1-q}+ \left(\frac{L_\nu D_X^{1+\nu}}{ \epsilon}\right)^\frac{1}{q\nu}\right\}.
\eeq
Observe that the best choice is $q=\tfrac{1}{1+\nu}$ which reduces the above bound to the order of the one in \eqnok{nocvx_bnd1_gk}. However, Algorithm~\ref{alg_MCGT} is assumed to be implemented without knowing any problem parameters and hence \eqnok{nocvx_bnd_pf_gk} would be worse than \eqnok{nocvx_bnd1_gk}. It should be also mentioned that the above observation suggests to confine the range of $q$ and have $q \in [1/2, 1)$ to get better a complexity bound. Moreover, when $f$ is convex, then \eqnok{main_cvx_pf3_gk} implies that the iteration complexity of Algorithm~\ref{alg_MCGT} to find and $\epsilon$-optimal solution of problem \eqnok{NLP} is bounded by
\beq \label{cvx_grad_bnd_pf_gk}
{\cal O}(1) \left\{\left(\frac{\Psi(x_0)-\Psi^*}{\epsilon}\right)^\frac{1}{3}+ \left(\frac{L_\nu D_X^{1+\nu}}{\epsilon}\right)^\frac{1}{\nu}\right\},
\eeq
which is in the same order of \eqnok{cvx_grad_bnd1_gk}. However, the above bound is obtained without knowing any problem parameters. It should be  mentioned that this bound (disregarding the first term) is also obtained by Nesterov in \cite{Nest15} for Algorithm~\ref{alg_CGT} when $f$ is convex. However, our choice of stepsizes and analysis in the MCGT method as a variant of Algorithm~\ref{alg_CGT} is slightly different.

Second, when $f$ is possibly nonconvex and $h$ is strongly convex, \eqnok{main_nocvx_pf2} implies that the bound in \eqnok{nocvx_bnd_pf_gk} is improved to
\beq \label{nocvx_bnd_pf}
{\cal O}(1) \left\{\left(\frac{\Psi(x_0)-\Psi^*}{\epsilon}\right)^\frac{1}{1-q}+ \left(\frac{L_\nu D_X^\nu}{\mu \sqrt \epsilon}\right)^\frac{1}{q\nu}\right\},
\eeq
which is worse than the one in \eqnok{nocvx_bnd2_gk}. Similar to the aforementioned observation, \eqnok{nocvx_bnd_pf} also suggests to confine the range of $q$ to $q \in [1/3, 1)$. Furthermore, when $f$ is convex, then \eqnok{main_cvx_pf2} implies that that the bound in \eqnok{nocvx_bnd_pf_gk} is improved to
\beq \label{cvx_grad_bnd_pf}
{\cal O}(1) \left\{\left(\frac{\Psi(x_0)-\Psi^*}{\epsilon}\right)^\frac{1}{3}+ \left(\frac{L_\nu D_X^\nu}{\mu \sqrt{\epsilon}}\right)^\frac{1}{\nu}\right\}.
\eeq
This bound, when the second term is the dominating one, is also obtained in \cite{Nest15}. Although, the above bound is still worse than the one in \eqnok{cvx_grad_bnd1}, it is obtained without knowing any problem parameter or doing any line search procedure. It is worth noting that in this case, we need boundedness of the feasible set even when $h$ is strongly convex.

On the other hand, if we know the problem parameters, it is possible to modify Algorithm~\ref{alg_MCGT} and obtain better results when $\mu>0$.
In particular, we can properly shrink the feasible set and restart Algorithm~\ref{alg_MCGT} to achieve the complexity bound in \eqnok{cvx_grad_bnd1} with the logarithmic factor. However, it requires knowing problem parameters and solving subproblems of \eqnok{def_xk} over the intersection of the feasible set and a ball around the current search point. Hence, this algorithm has limitation in practice and we skip its presentation for sake of simplicity.
}

\setcounter{equation}{0}
\section{Stochastic Conditional gradient type method for composite optimization}\label{sec_SCGT}

In this section, we would like to generalize the algorithms presented in Section~\ref{sec_CGT} for solving problem \eqnok{NLP} under stochastic setting. In particular, we assume that only noisy information of $f$ is available through the stochastic oracle ($\SO$) satisfying Assumption~\ref{assump_st_grad}. Moreover, we consider $\|\cdot\|$ as the norm associated with the inner product in $\bbr^n$ throughout this section.% which implies that $\|\cdot\|_* = \|\cdot\|$.

Below we present a stochastic variant of Algorithm~\ref{alg_CGT} for solving \eqnok{NLP}.

\begin{algorithm} [H]
	\caption{The randomized stochastic conditional gradient type (RSCGT) algorithm}
	\label{alg_SCGT}
	\begin{algorithmic}
\STATE Input: $x_0 \in X$, $\{\alpha_k\}_{k \ge 1} \in (0,1]$, an iteration limit $N \ge 1$, probability mass function (PMF) $P_R(\cdot)$ supported on $\{1,2,\ldots,N\}$, and sequence of positive integer numbers $\{b_k\}_{k \ge 1}$.

\vgap

\STATE 0. Set $y_0 = x_0$, $k=1$, and generate random variable $R$ according to $P_R(\cdot)$.

\vgap

{\bf For $k=1, 2, \ldots, R$:}

{\addtolength{\leftskip}{0.2in}
\STATE 1. Call the stochastic oracle $b_k$ times to compute
\beq \label{def_barG}
\bar G(y_{k-1},\xi_k) = \frac{\sum_{i=1}^{b_k} G(y_{k-1},\xi_{i,k})}{b_k}.
\eeq
\STATE 2. Set
\beq \label{def_xk_tilde}
x_k = w(\bar G(y_{k-1},\xi_k)) = \arg\min\limits_{u \in X} \left\{ \langle \bar G(y_{k-1},\xi_k), u \rangle
+ h(u) \right\},
\eeq
and $y_k$ to \eqnok{def_yk}.

}

{\bf End}

	\end{algorithmic}
\end{algorithm}

Note that the above algorithm differs from Algorithm~\ref{alg_CGT} in two aspects. First, replacing the true gradient of $f$ by its stochastic approximation given in \eqnok{def_barG}. While this average of stochastic gradients is an unbiased estimator for the true gradient of $f$, it has reduced variance with respect to the individual ones. More specially, denoting $\Delta_{i,k}= G(y_{k-1},\xi_{i,k})-f'(y_{k-1})$ and under Assumption~\ref{assump_st_grad}, we have
\beqa
\bbe[\bar G(y_{k-1},\xi_k)] &=& \frac{\sum_{i=1}^{b_k} \bbe[G(y_{k-1},\xi_{i,k})]}{b_k}=f'(y_{k-1}), \nn \\
\bbe \left[\|\bar G(y_{k-1},\xi_k)-f'(y_{k-1})\|_*^2 \right] &=& \frac{1}{b_k^2} \bbe \left[\|\sum_{i=1}^{b_k} \Delta_{i,k}\|_*^2 \right] \le \frac{\sigma^2}{b_k}\label{var_red},
\eeqa
where the last inequality follows from the fact that
$
\bbe \left[\langle \sum_{i=1}^{b_k-1} \Delta_{i,k} ,\Delta_{b_k,k}\rangle | \sum_{i=1}^{b_k-1} \Delta_{i,k} \right]=0.
$

Second, Algorithm~\ref{alg_SCGT} is terminated according to the random variable $R$ since $\hat N$ in \eqnok{k_min} is not computable under stochastic setting. Hence, this algorithm consists of two levels of randomness: $\xi$ coming from the stochastic oracle and $R$ imposed for terminating the algorithm.
This randomization scheme is the current practice for establishing rate of convergence of stochastic algorithms for nonconvex problems. As mentioned in Section~\ref{sec_intro}, it was first proposed by Ghadimi and Lan in \cite{GhaLan12-2} for smooth unconstrained stochastic optimization and later used for constrained and composite optimization (see e.g., \cite{GhaLanZhang14,GhaLan15,DangLan13-1}).

{\color{black}
Noting subproblem \eqnok{def_xk_tilde}, we can define stochastic generalized Frank-Wolf gap ($\tilde g_k$) which will not be useful in our analysis. Hence, we still use \eqnok{def_gk} with $w(f'(y_{k-1}))$ to establish convergence of Algorithm~\ref{alg_SCGT}. Note however, that this quantity is not available under stochastic setting and it is only used to provide convergence analysis. Moreover, we cannot generalize the analysis used in Theorem~\ref{theo_CGT} for the weakly smooth stochastic problems due to the dependence of the stepsizes on $\tilde g_k$. Therefore, we use different stepsize policies to establish convergence analysis of Algorithm~\ref{alg_SCGT}. It is also worth noting that, when  $\nu=1$ and $h$ is strongly convex, then stepsizes similar to those used in Theorem~\ref{theo_CGT} only depend on problem parameters and give us a better rate of convergence. Hence, in addition to the general results, we also present specific rates of convergence when $\nu=1$ and $\mu>0$.

In the next result, we provide main convergence analysis of Algorithm~\ref{alg_SCGT}.

\begin{theorem} \label{theo_SCGT}
Suppose that the sequence $\{x_k,y_k\}_{k \ge 0}$ is generated by Algorithm~\ref{alg_SCGT} and $X$ is bounded.

\begin{itemize}
\item [a)] Assume that $D_X \ge 1$ (for simplicity) and let $R$ be a discrete random variable whose PMF is given by
\beq \label{def_prob_gk}
P_R(R=k) = \frac{\alpha_k}{\sum_{k=1}^N \alpha_k} \ \ k=1,\ldots,N
\eeq
for some given iteration limit $N \ge1$. Then, we have
\beq \label{main_nocvx_st_gk}
\bbe_{\xi_{[N]},R} \left[g_R\right] \le  \frac{\Psi(x_0)- \Psi^*+\sum_{k=1}^N \left[\frac{2L_\nu D_X^2 \alpha_k^{1+\nu}}{(1+\nu)} + \frac{(1+\nu)\sigma^2 \alpha_k^{1-\nu}}{4L_\nu b_k}\right]}{\sum_{k=1}^N \alpha_k}.
\eeq
If $f$ is convex and $P_R(\cdot)$ is given by
\beq \label{def_prob_cvx_gk}
P_R(R=k) = \frac{\alpha_k A_k^{-1} }{\sum_{k=1}^N \alpha_k A_k^{-1}} \ \ k=1,\ldots,N,
\eeq
then we have
\beq
\bbe_{\xi_{[N]},R} \left[\Psi(y_{R-1})-\Psi^*\right] \le \bbe_{\xi_{[N]},R} [g_R]
\le  \frac{2\left\{\Psi(x_0)- \Psi(x^*)+ \sum_{k=1}^N \left[\frac{2L_\nu D_X^2 \alpha_k^{1+\nu}}{(1+\nu) A_k} + \frac{(1+\nu)\sigma^2 \alpha_k^{1-\nu}}{4L_\nu b_k A_k}\right]\right\}}{\sum_{k=1}^N \alpha_k  A_k^{-1}}, \label{main_cvx_st_gk}
\eeq
where $A_k$ is defined in \eqnok{def_Alpha}.

\item [b)] Assume that $h$ is strongly convex with parameter $\mu>0$, and stepsizes are chosen such that
\beq \label{cond_alpha_pf_st}
\alpha_k \le \frac{6}{7} \ \ \forall k \ge 1.
\eeq
If $P_R(\cdot)$ is given by \eqnok{def_prob_gk}, then we have
\beq \label{main_nocvx_pf-st-0}
\bbe_{\xi_{[N]},R} \left[g_R\right] \le  \frac{\Psi(x_0)- \Psi^*+\sum_{k=1}^N \left[\frac{7\mu{\cal C}_\nu^2 \alpha_k^{1+2\nu}}{2} + \frac{\sigma^2 \alpha_k}{2\mu b_k}\right]}{\sum_{k=1}^N \alpha_k},
\eeq
where ${\cal C}_\nu$ is defined in \eqnok{def_Cnu}.\\
If, in addition, $f$ is convex and $P_R(\cdot)$ is given by \eqnok{def_prob_cvx_gk}, then we have
\beq
\bbe_{\xi_{[N]},R} \left[\Psi(y_{R-1})-\Psi(x^*)\right] \le \bbe_{\xi_{[N]},R} [g_R]
\le  \frac{2\left\{\Psi(x_0)- \Psi^*+\sum_{k=1}^N \left[\frac{7\mu{\cal C}_\nu^2 \alpha_k^{1+2\nu}}{2A_k} + \frac{\sigma^2 \alpha_k}{2\mu b_k A_k}\right]\right\}}{\sum_{k=1}^N \alpha_k  A_k^{-1}}. \label{main_cvx_st-pf-0}
\eeq

\end{itemize}

\end{theorem}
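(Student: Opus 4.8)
The plan is to mirror the deterministic arguments behind Theorem~\ref{theo_CGT} and Theorem~\ref{theo_CGT_pf}, i.e. to produce a one-step descent inequality for $\Psi(y_k)$ and then sum it with the random-index weighting, but the genuinely new difficulty is that the iterate is now updated through the \emph{stochastic} point $x_k = w(\bar G(y_{k-1},\xi_k))$ of \eqnok{def_xk_tilde}, whereas the gap $g_k$ in \eqnok{def_gk} is (by the convention fixed just before the theorem) evaluated at the \emph{exact} conditional-gradient point $\bar x_k := w(f'(y_{k-1}))$. Write $\delta_k := \bar G(y_{k-1},\xi_k)-f'(y_{k-1})$ and $\phi_k(u):=\langle f'(y_{k-1}),u\rangle + h(u)$. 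Applying \eqnok{f_holder2} and \eqnok{h_strng} along the update \eqnok{def_yk}, exactly as in the first lines of the proof of Theorem~\ref{theo_CGT} but with $x_k$ the stochastic point, gives
\[
\Psi(y_k) \le \Psi(y_{k-1}) + \alpha_k Q_k + \frac{L_\nu\alpha_k^{1+\nu}}{1+\nu}\|x_k-y_{k-1}\|^{1+\nu} - \frac{\alpha_k(1-\alpha_k)\mu}{2}\|x_k-y_{k-1}\|^2,
\]
where $Q_k := \langle f'(y_{k-1}),x_k-y_{k-1}\rangle + h(x_k)-h(y_{k-1}) = \phi_k(x_k)-\phi_k(y_{k-1})$.

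The key bridging step, which I expect to be the main obstacle, is to turn $Q_k$ back into $-g_k$ up to a controllable noise term. Since $g_k=\phi_k(y_{k-1})-\phi_k(\bar x_k)$ by \eqnok{def_gk}, we have $Q_k+g_k=\phi_k(x_k)-\phi_k(\bar x_k)$. Because $x_k$ minimizes the \emph{perturbed} objective $\tilde\phi_k(u)=\phi_k(u)+\langle\delta_k,u\rangle$ over $X$, the optimality $\tilde\phi_k(x_k)\le\tilde\phi_k(\bar x_k)$ yields
\[
Q_k+g_k=\tilde\phi_k(x_k)-\tilde\phi_k(\bar x_k)-\langle\delta_k,x_k-\bar x_k\rangle\le\langle\delta_k,\bar x_k-x_k\rangle .
\]
Substituting this into the descent inequality replaces $\alpha_k Q_k$ by $-\alpha_k g_k+\alpha_k\langle\delta_k,\bar x_k-x_k\rangle$, and it remains to control the inner-product noise and the H\"older remainder under the two sets of hypotheses.

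For part a) I would drop the (nonpositive) strong-convexity term, bound $\langle\delta_k,\bar x_k-x_k\rangle\le\|\delta_k\|_*\|\bar x_k-x_k\|\le\|\delta_k\|_* D_X$ and $\|x_k-y_{k-1}\|\le D_X$, and split the linear noise by $ab\le a^2/2+b^2/2$ with the scaling balancing it against the H\"older term: writing $\alpha_k\|\delta_k\|_* D_X$ as $\big(\sqrt{2L_\nu/(1+\nu)}\,\alpha_k^{(1+\nu)/2}D_X\big)\big(\sqrt{(1+\nu)/(2L_\nu)}\,\alpha_k^{(1-\nu)/2}\|\delta_k\|_*\big)$ produces a $\tfrac{L_\nu D_X^2\alpha_k^{1+\nu}}{1+\nu}$ piece, which (using $D_X\ge1$ so $D_X^{1+\nu}\le D_X^2$) merges with the H\"older term into the coefficient $\tfrac{2L_\nu D_X^2\alpha_k^{1+\nu}}{1+\nu}$, plus a $\tfrac{(1+\nu)\alpha_k^{1-\nu}}{4L_\nu}\|\delta_k\|_*^2$ piece; taking conditional expectation and invoking \eqnok{var_red} turns $\|\delta_k\|_*^2$ into $\sigma^2/b_k$, giving precisely the per-step error of \eqnok{main_nocvx_st_gk}. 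For part b) I would instead exploit strong convexity: adding the optimality/strong-convexity inequalities for $\bar x_k$ (minimizer of the $\mu$-strongly convex $\phi_k$) and $x_k$ (minimizer of $\tilde\phi_k$) gives the stability estimate $\mu\|x_k-\bar x_k\|^2\le\langle\delta_k,\bar x_k-x_k\rangle$, hence $\langle\delta_k,\bar x_k-x_k\rangle\le\|\delta_k\|_*^2/\mu$, which after conditional expectation contributes (up to an absolute constant) the noise term $\tfrac{\sigma^2\alpha_k}{2\mu b_k}$ of \eqnok{main_nocvx_pf-st-0}. The H\"older remainder is handled as in Theorem~\ref{theo_CGT_pf}.b) via \eqnok{prof_hlod_cvx2} with $m_1=\alpha_k$, $m_2=\sqrt7$: the resulting $\tfrac{\mu\alpha_k}{14}\|x_k-y_{k-1}\|^2$ is absorbed by $-\tfrac{\alpha_k(1-\alpha_k)\mu}{2}\|x_k-y_{k-1}\|^2$, nonpositive under \eqnok{cond_alpha_pf_st}, leaving $\tfrac{7\mu{\cal C}_\nu^2\alpha_k^{1+2\nu}}{2}$ after $\|x_k-y_{k-1}\|^{2\nu}\le D_X^{2\nu}$ and \eqnok{def_Cnu}.

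Finally I would assemble everything. Taking total expectation, summing over $k=1,\dots,N$, telescoping $\Psi(y_k)$ and using $\Psi(y_N)\ge\Psi^*$ gives $\sum_k\alpha_k\bbe[g_k]\le\Psi(x_0)-\Psi^*+\sum_k(\text{error}_k)$; since $R$ is drawn independently from \eqnok{def_prob_gk}, $\bbe_{\xi_{[N]},R}[g_R]=\big(\sum_k\alpha_k\bbe[g_k]\big)/\big(\sum_k\alpha_k\big)$, yielding \eqnok{main_nocvx_st_gk} and \eqnok{main_nocvx_pf-st-0}. For the convex statements I would subtract $\Psi^*$, use $g_k\ge\Psi(y_{k-1})-\Psi^*$ from \eqnok{gk_cvxbnd} to convert $-\alpha_k g_k$ into a $(1-\tfrac{\alpha_k}{2})$ contraction while retaining $-\tfrac{\alpha_k}{2}g_k$, divide by $A_k$ from \eqnok{def_Alpha}, telescope the $A_k^{-1}$-weighted sum, and average with the PMF \eqnok{def_prob_cvx_gk}; the factor $2$ in \eqnok{main_cvx_st_gk} and \eqnok{main_cvx_st-pf-0} arises from the $\tfrac12$ multiplying $\tfrac{\alpha_k g_k}{A_k}$, and the outer inequality $\bbe[\Psi(y_{R-1})-\Psi^*]\le\bbe[g_R]$ is again \eqnok{gk_cvxbnd}.
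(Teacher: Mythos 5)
Your proposal follows essentially the same route as the paper's proof: the paper likewise compares the stochastic minimizer $x_k$ against the exact point $w(f'(y_{k-1}))$ through the optimality condition of the perturbed subproblem, splits the resulting noise inner product by Young's inequality with exactly your exponents (giving the $\tfrac{2L_\nu D_X^2\alpha_k^{1+\nu}}{1+\nu}$ and $\tfrac{(1+\nu)\alpha_k^{1-\nu}}{4L_\nu}\|\Delta_k\|_*^2$ terms in part a)), handles the H\"older remainder in part b) via \eqnok{prof_hlod_cvx2} with $m_1=\alpha_k$, $m_2=\sqrt 7$, and then sums with the PMF weights and the $A_k^{-1}$ telescoping. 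The one (harmless, and flagged by you) deviation is in part b): your stability route $\langle\delta_k,\bar x_k-x_k\rangle\le\|\delta_k\|_*^2/\mu$ loses a factor of $2$ against the stated coefficient $\tfrac{\sigma^2\alpha_k}{2\mu b_k}$, whereas the paper retains the $\tfrac{\mu}{2}\|x_k-w(f'(y_{k-1}))\|^2$ term coming from strong convexity of $h$ in the optimality condition and cancels it against the Young split $\langle\Delta_k,\cdot\rangle\le\tfrac{\mu}{2}\|\cdot\|^2+\tfrac{1}{2\mu}\|\Delta_k\|_*^2$, which recovers the exact constant.
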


\begin{proof}
We first show part a). Noting optimality condition of \eqnok{def_xk_tilde}, (strong) convexity of $h$, for any $u \in X$ and some $p_h \in \partial h(x_k)$, we have
\begin{align}
&\langle f'(y_{k-1})+\Delta_k+p_h , u-x_k \rangle \ge 0, \nn \\
&h(u) \ge h(x_k) + \langle p_h, u-x_k \rangle +\frac{\mu}{2} \|u-x_k\|^2,\nn
\end{align}
where $\Delta_k= \bar G(y_{k-1},\xi_k)-f'(y_{k-1})$. Summing up the above inequalities, we obtain
\beq \label{opt_stg_h_st}
\langle f'(y_{k-1})+\Delta_k, x_k-u \rangle +h(x_k) +\frac{\mu}{2}\|x_k-u\|^2 \le h(u) \ \ \ \  \forall u \in X.
\eeq
Letting $u=w(f'(y_{k-1}))$ (the solution of the subproblem when true gradient is used) in the above inequality, multiplying it by $\alpha_k$, adding it up with \eqnok{prof_hold0}, and \eqnok{h_strng_2}, we obtain
\beqa
\Psi(y_k) &\le& \Psi(y_{k-1})-\alpha_k \left[\langle f'(y_{k-1}), y_{k-1}-w(f'(y_{k-1})) \rangle +h(y_{k-1})-h(w(f'(y_{k-1})))\right]+ \frac{L_\nu \alpha_k^{1+\nu}}{1+\nu} \|x_k-y_{k-1}\|^{1+\nu} \nn \\ &+&\alpha_k \langle \Delta_k, w(f'(y_{k-1}))-x_k \rangle
- \frac{\mu\alpha_k}{2} \left[\|w(f'(y_{k-1}))-x_k\|^2+(1-\alpha_k)\|x_k-y_{k-1}\|^2\right]\nn \\
&=& \Psi(y_{k-1})-\alpha_k g_k+ \frac{L_\nu \alpha_k^{1+\nu}}{1+\nu} \|x_k-y_{k-1}\|^{1+\nu}+\alpha_k \langle \Delta_k, w(f'(y_{k-1}))-x_k \rangle \nn \\
&-& \frac{\mu\alpha_k}{2} \left[\|w(f'(y_{k-1}))-x_k\|^2+(1-\alpha_k)\|x_k-y_{k-1}\|^2\right]\label{proof_cv_st_gk0} \\
&\le& \Psi(y_{k-1})- \alpha_k g_k + \frac{L_\nu \alpha_k^{1+\nu}}{1+\nu} \left[ \|x_k-y_{k-1}\|^{1+\nu} +\|w(f'(y_{k-1}))-x_k\|^2 \right] +\frac{(1+\nu)\alpha_k^{1-\nu} \|\Delta_k\|^2}{4L_\nu} \nn \\
&\le& \Psi(y_{k-1})- \alpha_k g_k + \frac{2L_\nu \alpha_k^{1+\nu} D_X^2}{1+\nu} +\frac{(1+\nu)\alpha_k^{1-\nu} \|\Delta_k\|^2}{4L_\nu}, \label{proof_cv_st_gk}
\eeqa
where the second inequality follows from the fact that
\beq \label{hold_ineq_delta}
\alpha_k \langle \Delta_k, w(f'(y_{k-1}))-x_k \rangle \le \alpha_k \|\Delta_k\|_* \|x_k-w(f'(y_{k-1})) \|
\le \frac{m_3 \alpha_k^{2m_4}}{2}\|x_k-w(f'(y_{k-1}))\|^2 + \frac{\alpha_k^{2(1-m_4)}}{2m_3}\|\Delta_k\|_*^2,
\eeq
with $m_3 = \frac{2L_\nu}{1+\nu}$, $m_4 = \frac{1+\nu}{2}$, and the last inequality follows from the assumption that $D_X \ge 1$. Taking expectation from both sides of the above inequality, re-arranging the terms, summing them up, diving both sides by $\sum_{k=1}^N \alpha_k$, noting \eqnok{var_red}, and in the view of
\[
\bbe_{\xi_{[N]},R} \left[g_R\right] = \frac{\sum_{k=1}^N \bbe_{\xi_{[N]}} \alpha_k g_k}{\sum_{k=1}^N \alpha_k},
\]
we obtain \eqnok{main_nocvx_st_gk}. If $f$ is convex, then \eqnok{main_cvx_st_gk} follows similarly to the proof of \eqnok{main_cvx_pf2_gk} by noting \eqnok{gk_cvxbnd}, \eqnok{proof_cv_st_gk}, and \eqnok{def_prob_cvx_gk}.

We now show part b). Noting \eqnok{proof_cv_st_gk0}, \eqnok{hold_ineq_delta} ($m_3 =\mu$, $m_4 =1/2$), and \eqnok{prof_hlod_cvx2} ($m_1 =\alpha_k$, $m_2=\sqrt 7$), we have
\beqa
\Psi(y_k) &\le& \Psi(y_{k-1})- \alpha_k g_k - \frac{\alpha_k(6-7\alpha_k)\mu}{14} \|x_k-y_{k-1}\|^2 +\frac{L_\nu^2 \alpha_k^{1+2\nu}}{\mu(1+\nu)^2} \|x_k-y_{k-1}\|^{2\nu} +\frac{\alpha_k \|\Delta_k\|_*^2}{2\mu} \nn \\
&\le& \Psi(y_{k-1})- \alpha_k g_k +\frac{7L_\nu^2 \alpha_k^{1+2\nu}D_X^{2\nu}}{2\mu(1+\nu)^2}  +\frac{\alpha_k \|\Delta_k\|_*^2}{2\mu},
\eeqa
where the second inequality follows from \eqnok{cond_alpha_pf_st} and boundedness of $X$.
Rest of the proof is similar to part a) and hence we skip the details.
\end{proof}
}
\vgap

In the next result, we specialize rate of  convergence of Algorithm~\ref{alg_SCGT} by specifying the choice of stepsizes.
{\color{black}
\begin{corollary} \label{corol_SCGT}
Suppose that the sequence $\{x_k,y_k\}_{k \ge 0}$ is generated by Algorithm~\ref{alg_SCGT} and $X$ is bounded.
\begin{itemize}
\item [a)] If stepsizes, batch sizes, and the iteration limit are set to
\beq \label{alpha_cond_smooth2_gk}
\alpha_k = \frac{1}{N_0(\epsilon)^\frac{1}{1+\nu}}, \ \ b_k = \bar b_\epsilon = \left \lceil\frac{(1+\nu) \sigma^2 D_{f,X}}{L_\nu \epsilon^2} \right \rceil \ \ \forall k \ge 1, \ \ N_0(\epsilon) =\left \lceil \left(\frac{2D_{f,X}}{\epsilon}\right)^\frac{1+\nu}{\nu} \right \rceil
\eeq
for some $D_{f,X} \ge \Psi(x_0)- \Psi^* +(2L_\nu D_X^2)/(1+\nu)$ and a given target accuracy $\epsilon>0$. Then, the total required number of calls to the stochastic oracle to have $\bbe[g_R] \le \epsilon$, where $R$ is uniformly distributed over $\{1,2,\ldots, N_0(\epsilon)\}$, is bounded by
\beq \label{nocvx_bnd_st_gk}
\bar b_\epsilon N_0(\epsilon)  =  \frac{{\cal O} (1) \sigma^2 D_{f,X}^\frac{1+2\nu}{\nu}}{L_\nu \epsilon^\frac{1+3\nu}{\nu}}.
\eeq
If $f$ is convex, the stepsizes are set to \eqnok{cond_alpha_pf2}, $P_R(\cdot)$ is given by \eqnok{def_prob_cvx_gk}, batch sizes, and the iteration limit are set to
\beq \label{alpha_cond_smooth2_gk2}
b_k =  \left \lceil\left(\frac{(1+\nu) \sigma (k+3)^\nu}{2^{2\nu+1.5} L_\nu D_X}\right)^2 \right \rceil \ \ \forall k \ge 1, \ \ N_1(\epsilon) =\left \lceil4 \left(\frac{216 L_\nu D_X^2}{(1+\nu) \epsilon}\right)^\frac{1}{\nu}\right \rceil,
\eeq
and assuming $\Psi(x_0)- \Psi^* \le {\cal O}(N_1(\epsilon)^{3-\nu}L_\nu D_X^2)$ (for simplicity), then the complexity bound in \eqnok{nocvx_bnd_st_gk} is improved to
\beq \label{cvx_bnd_st_gk}
b_\epsilon N_1(\epsilon) = {\cal O} (1) \sigma^2 \left(\frac{L_\nu D_X^{2+2\nu}}{ \epsilon^{1+2\nu}} \right)^\frac{1}{\nu},
\eeq
which also guarantees $\bbe \left[\Psi(y_{R-1})-\Psi(x^*)\right] \le \epsilon$.

\item [b)] If $\mu>0$, stepsizes, batch sizes, and the iteration limit are set to
\beq \label{cond_alpha_pf_st2}
\alpha_k = \frac{6}{7N_2(\epsilon)^\frac{1}{1+2\nu}} \ \ b_k = b_\epsilon = \left \lceil\frac{2\sigma^2}{\mu \epsilon} \right \rceil \ \ \forall k \ge 1, \ \ N_2(\epsilon) =\left \lceil \left(\frac{14}{3\epsilon} \left[\frac{\Psi_0-\Psi^*}{3}+ \mu \left(\frac{6^\nu {\cal C}_\nu}{7^\nu}\right)^2\right]\right)^\frac{1+2\nu}{2\nu}\right \rceil,
\eeq
then the complexity bound in \eqnok{nocvx_bnd_st_gk} is improved to
\beq \label{nocvx_bnd_st2}
b_\epsilon N_2(\epsilon)  = \frac{ {\cal O}(1)\sigma^2}{\mu \epsilon^\frac{1+4\nu}{2\nu}} \left[\Psi(x_0)- \Psi^*+ \frac{L_\nu^2 D_X^{2\nu}}{\mu}\right]^\frac{1+2\nu}{2\nu}.
\eeq
If, $f$ is convex, stepsizes are set to \eqnok{cond_alpha_pf2}, the iteration limit is set to
\beq \label{cond_alpha_pf_st3}
N_3(\epsilon) =\left \lceil \left(\frac{56 \cdot 36^\nu \mu {\cal C}_\nu^2 }{\epsilon}\right)^\frac{1}{2\nu}\right \rceil,
\eeq
and assuming $\Psi(x_0)- \Psi^* \le {\cal O}(\mu {\cal C}_\nu^2 N_3(\epsilon)^{3-2\nu})$ (for simplicity), then the complexity bound in \eqnok{cvx_bnd_st_gk} is improved to
\beq \label{cvx_bnd_st3}
b_\epsilon N_3(\epsilon) = \frac{{\cal O} (1) \sigma^2 L_\nu^\frac{1}{\nu}D_X}{(\mu \epsilon)^\frac{1+2\nu}{2\nu}}.
\eeq
Moreover, this bound guarantees $\bbe \left[\Psi(y_{R-1})-\Psi(x^*)\right] \le \epsilon$.

\end{itemize}

\end{corollary}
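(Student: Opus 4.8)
The plan is to treat Corollary~\ref{corol_SCGT} purely as a specialization of the four master inequalities \eqnok{main_nocvx_st_gk}, \eqnok{main_cvx_st_gk}, \eqnok{main_nocvx_pf-st-0}, and \eqnok{main_cvx_st-pf-0} of Theorem~\ref{theo_SCGT}. In each regime I would substitute the prescribed stepsizes, batch sizes $b_k$, and iteration limits, evaluate (or bound) the resulting sums, and then use the definition of the iteration limit to force the right-hand side down to $\epsilon$; once $\bbe[g_R]\le\epsilon$ is secured, the oracle complexity is just the total number of calls, namely the batch size times the iteration count (or $\sum_k b_k$ when the batch grows). Throughout, the deterministic part of each numerator is made to decay at the algorithm's intrinsic rate while the batch sizes are tuned so the stochastic part decays at the same rate, and the iteration limit is chosen to equalize the two.

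For the two nonconvex parts (a and b) the stepsizes are constant, so $\sum_{k}\alpha_k$, $\sum_k\alpha_k^{1+\nu}$, $\sum_k\alpha_k^{1-\nu}$, and $\sum_k\alpha_k^{1+2\nu}$ are all explicit powers of $N$. In part~a), with $\alpha_k=N_0(\epsilon)^{-1/(1+\nu)}$ the smoothness sum collapses to the constant $2L_\nu D_X^2/(1+\nu)$, which together with $\Psi(x_0)-\Psi^*$ is dominated by $D_{f,X}$; the choice $b_k=\bar b_\epsilon$ makes the noise sum at most $(\epsilon^2/4D_{f,X})\,N_0^{2\nu/(1+\nu)}$; and the definition of $N_0(\epsilon)$ forces $N_0^{\nu/(1+\nu)}\ge 2D_{f,X}/\epsilon$, so dividing by $\sum_k\alpha_k=N_0^{\nu/(1+\nu)}$ balances the deterministic and stochastic halves at $\epsilon/2$ each (modulo the ceiling rounding). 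The same bookkeeping with $\alpha_k=\tfrac67 N_2(\epsilon)^{-1/(1+2\nu)}$ and $b_\epsilon=\lceil 2\sigma^2/(\mu\epsilon)\rceil$ handles part~b): the $\mu{\cal C}_\nu^2$ term collapses to $3\mu{\cal C}_\nu^2(6/7)^{2\nu}$, whose combination with $\Psi(x_0)-\Psi^*$ is exactly the bracket appearing in $N_2(\epsilon)$, so the deterministic half is $\le 3\epsilon/4$ and the noise half is $\le\epsilon/4$. Multiplying $\bar b_\epsilon N_0$ and $b_\epsilon N_2$ and simplifying the exponents of $\epsilon$, $L_\nu$, $D_X$, and $\mu$ produces \eqnok{nocvx_bnd_st_gk} and \eqnok{nocvx_bnd_st2}.

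For the two convex parts I would reuse the accelerated choice \eqnok{cond_alpha_pf2} together with the closed forms for $A_k$, the identity $\sum_{k=1}^N\alpha_k/(2A_k)=A_N^{-1}-1\ge(N+3)(N+4)(N+5)/160$, and the bounds on $\sum_k\alpha_k^{1+\nu}/A_k$ and $\sum_k\alpha_k^{1+2\nu}/A_k$ already recorded in \eqnok{Ak_ineq}. The decisive difference between the two parts lies in the power of $\alpha_k$ multiplying the noise. In part~b) convex the weight in \eqnok{main_cvx_st-pf-0} is $\sigma^2\alpha_k/(2\mu b_k A_k)$, whose $\alpha_k/A_k$ factor is exactly the summand of the denominator $\sum_k\alpha_k A_k^{-1}$, so a constant batch $b_\epsilon=\lceil 2\sigma^2/(\mu\epsilon)\rceil$ already makes the entire stochastic contribution collapse to the constant $\sigma^2/(\mu b_\epsilon)\le\epsilon/2$; the deterministic half is then controlled by $\sum_k\alpha_k^{1+2\nu}/A_k$ and the definition of $N_3(\epsilon)$, and $b_\epsilon N_3$ gives \eqnok{cvx_bnd_st3}. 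In part~a) convex the weight in \eqnok{main_cvx_st_gk} carries the mismatched power $\alpha_k^{1-\nu}$, so a constant batch would let the noise sum outgrow the denominator; the growing batch sizes of \eqnok{alpha_cond_smooth2_gk2} are engineered precisely so that $1/b_k\le O((k+3)^{-2\nu})$, restoring the $\alpha_k/A_k$ scaling and making the noise sum of the same $N^{3-\nu}$ order as the smoothness sum. Dividing by $\sum_k\alpha_k A_k^{-1}\sim N^3$ then yields the rate $O(N^{-\nu})$, which $N_1(\epsilon)$ drives below $\epsilon$; summing $b_k\sim(k+3)^{2\nu}$ gives $\sum_k b_k\sim N_1^{1+2\nu}\sigma^2/(L_\nu^2 D_X^2)$, and substituting $N_1\sim(L_\nu D_X^2/\epsilon)^{1/\nu}$ produces \eqnok{cvx_bnd_st_gk}. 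In both convex cases the optimality-gap statement $\bbe[\Psi(y_{R-1})-\Psi^*]\le\bbe[g_R]$ is immediate from \eqnok{gk_cvxbnd}, and the simplicity assumptions on $\Psi(x_0)-\Psi^*$ merely ensure the non-growing initial gap is dominated by the smoothness term so the stated rate holds.

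The main obstacle is precisely the convex case of part~a): one must simultaneously guarantee $\bbe[g_R]\le\epsilon$ and keep the total oracle count $\sum_k b_k$ at the claimed order. Because the noise weight $\alpha_k^{1-\nu}/A_k$ and the acceleration weight $\alpha_k/A_k$ scale differently in $k$, choosing $b_k$ too small lets the stochastic error dominate, while choosing it too large inflates $\sum_k b_k$; the exponent $2\nu$ in $b_k\propto(k+3)^{2\nu}$ is the unique balance (up to constants) that makes the stochastic error decay in lockstep with the deterministic $O(N^{-\nu})$ rate. Verifying through an integral estimate that the resulting weighted sum is genuinely $O(N^{3-\nu})$ rather than larger is the delicate step; the remaining work of collecting the powers of $\epsilon$, $L_\nu$, $D_X$, and $\mu$ is routine bookkeeping.
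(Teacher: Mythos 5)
Your proposal is correct and follows essentially the same route as the paper: the paper's own (very terse) proof likewise just substitutes the prescribed $\alpha_k$, $b_k$, and iteration limits into the master bounds \eqnok{main_nocvx_st_gk}--\eqnok{main_cvx_st-pf-0}, reuses the sums in \eqnok{Ak_ineq} for the convex cases, and counts $\sum_k b_k$ oracle calls. Your intermediate computations (the $\epsilon/2+\epsilon/2$ and $3\epsilon/4+\epsilon/4$ splits, and the role of the growing batch $b_k\sim(k+3)^{2\nu}$ in matching the $\alpha_k^{1-\nu}/A_k$ noise weight to the $\alpha_k^{1+\nu}/A_k$ smoothness weight) are consistent with, and in fact more explicit than, what the paper records.
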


\begin{proof}
By \eqnok{def_prob_gk} and choice of $\alpha_k$ in \eqnok{alpha_cond_smooth2_gk} we have $R$ is uniformly distributed and together with \eqnok{main_nocvx_st_gk}, we obtain
\[
\bbe_{\xi_{[N]},R} \left[g_R\right] \le  \frac{\Psi(x_0)- \Psi^*+2L_\nu D_X^2/(1+\nu)}{N_0(\epsilon)^\frac{\nu}{1+\nu}}+\frac{(1+\nu)\sigma^2 N_0(\epsilon)^\frac{\nu}{1+\nu}}{4L_\nu \bar b_\epsilon}.
\]
Replacing choices of batchsizes and the iteration limit given by \eqnok{alpha_cond_smooth2_gk}, in the above relation, we clearly have $\bbe[g_R] \le \epsilon$. Hence, \eqnok{nocvx_bnd_st_gk} follows from the fact that the total number of calls to the $\SO$ is bounded by $\sum_{k=1}^N b_k = \sum_{k=1}^{N_0(\epsilon)} \bar b_\epsilon =  \bar b_\epsilon N_0(\epsilon)$. Moreover, If $f$ is convex, then combining \eqnok{cond_alpha_pf2}, \eqnok{def_prob_cvx_gk}, \eqnok{main_cvx_st_gk}, \eqnok{alpha_cond_smooth2_gk2}, and similarly to the proof of Corollary~\ref{corol_CGT_pf}.a, we obtain \eqnok{cvx_bnd_st_gk} which also guarantees $\bbe \left[\Psi(y_{R-1})-\Psi(x^*)\right] \le \epsilon$.\\
Part b) follows similarly and hence we skip the details.

\end{proof}

\vgap

We now add a few remarks about the above results. First, note that the framework of Algorithm~\ref{alg_MCGT} cannot be directly generalized to the stochastic setting. More specifically, since we do not know the exact value of the objective function $\Psi$, taking that minimum in Step 2 of this algorithm requires much computational effort under stochastic setting. Hence, we use different stepsize policies for both nonconvex and (strongly) convex cases in the above result which lead to different complexity bounds.

Second, the complexity bound in \eqnok{nocvx_bnd_st_gk} seems to be the first one for solving stochastic nonconvex weakly smooth optimization problems obtained by the class of algorithms including subproblems in the form \eqnok{subproblem}. When $\nu=1$, this bound is reduced to ${\cal O}(\epsilon^{-4})$ which is in the same order of the one achieved by a stochastic CG method in \cite{ReSrPoSm16} for smooth nonconvex problems ($h \equiv 0$). Moreover, when $h$ is strongly convex, \eqnok{nocvx_bnd_st_gk} is improved to \eqnok{nocvx_bnd_st2} which is also better than the complexity of ${\cal O}(\epsilon^{-\tfrac{(1+\nu)^2}{\nu}})$ obtained for a gradient type method including subproblems in the form \eqnok{subproblem2} in which the Bregman distance function is replaced by $\|\cdot\|^{1+\nu}$ and under the boundedness assumption of higher momentums of the stochastic error in gradient estimations (\cite{JiaZha14}).

Third, when $f$ is convex, \eqnok{cvx_bnd_st_gk} seems to be the first complexity bound of CGT methods for solving stochastic convex weakly smooth optimization problems. When $\nu=1$, this bound is ${\cal O}(\epsilon^{-3})$ which is worse than ${\cal O}(\epsilon^{-2})$ obtained in \cite{LanZho16} for a conditional gradient method skipping computing stochastic gradients for some iterations. When $\Psi$ is strongly convex, \eqnok{cvx_bnd_st_gk} is improved to \eqnok{cvx_bnd_st3} and it reduces to ${\cal O}(\epsilon^{-1.5})$ for solving stochastic strongly convex smooth optimization problems. Note however, that this bound is still worse than the optimal complexity ${\cal O}(\epsilon^{-1})$ for stochastic first-order methods. In the rest of this section, we provide a different choice of stepsizes for Algorithm~\ref{alg_SCGT} to achieve the aforementioned optimal complexity bound when $\nu=1$ and $\mu>0$.

\vgap

\begin{theorem}
Suppose that the sequence $\{x_k,y_k\}_{k \ge 0}$ is generated by Algorithm~\ref{alg_SCGT}, $h$ is strongly convex with parameter $\mu>0$, $f$ has Lipschitz continuous gradient i.e., $\nu=1$ in \eqnok{f_holder1} and \eqnok{f_holder2}, the stepsizes satisfy
\beq \label{alpha_cond_smooth}
\alpha_k \le \frac{\mu}{L_1+\mu} \ \ \forall k \ge 1,
\eeq
and $P_R(\cdot)$ is given by \eqnok{def_prob_gk}. Then, we have
\beq \label{main_nocvx_st}
\bbe_{\xi_{[N]},R} \left[g_R\right] \le  \frac{\Psi(x_0)- \Psi^*+\frac{\sigma^2}{2\mu}\sum_{k=1}^N \frac{\alpha_k}{b_k}}{\sum_{k=1}^N \alpha_k}
\eeq
If, in addition, $f$ is convex and $P_R(\cdot)$ is given by \eqnok{def_prob_cvx_gk}, then we have
\beq
\bbe_{\xi_{[N]},R} \left[\Psi(y_{R-1})-\Psi(x^*)\right] \le \bbe_{\xi_{[N]},R}[g_R]
\le \frac{2[\Psi(x_0)- \Psi^*]+\frac{\sigma^2}{\mu}\sum_{k=1}^N \frac{\alpha_k}{b_k A_k}}{\sum_{k=1}^N \alpha_k A_k^{-1}}, \label{main_cvx_grad_st-0}
\eeq
where $A_k$ is defined in \eqnok{def_Alpha}

\end{theorem}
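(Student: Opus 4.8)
The plan is to re-derive the one-step descent inequality \eqnok{proof_cv_st_gk0}, specialize it to the smooth case $\nu=1$, and then exploit the threshold condition \eqnok{alpha_cond_smooth} to absorb the quadratic smoothness term into the two strong-convexity terms that arise from \eqnok{opt_stg_h_st} and \eqnok{h_strng_2}. Setting $\nu=1$ in \eqnok{proof_cv_st_gk0} gives
\beq
\Psi(y_k) \le \Psi(y_{k-1})-\alpha_k g_k+ \frac{L_1 \alpha_k^{2}}{2} \|x_k-y_{k-1}\|^{2}+\alpha_k \langle \Delta_k, w(f'(y_{k-1}))-x_k \rangle - \frac{\mu\alpha_k}{2} \left[\|w(f'(y_{k-1}))-x_k\|^2+(1-\alpha_k)\|x_k-y_{k-1}\|^2\right].\nn
\eeq
The first step I would take is to note that \eqnok{alpha_cond_smooth} is equivalent to $\alpha_k L_1 \le \mu(1-\alpha_k)$, so that $\tfrac{L_1\alpha_k^2}{2}\|x_k-y_{k-1}\|^2$ is dominated by $\tfrac{\mu\alpha_k(1-\alpha_k)}{2}\|x_k-y_{k-1}\|^2$ and the two exactly cancel.

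Next I would bound the stochastic cross-term using \eqnok{hold_ineq_delta} with $m_3=\mu$, $m_4=1/2$, giving
\beq
\alpha_k \langle \Delta_k, w(f'(y_{k-1}))-x_k \rangle \le \frac{\mu\alpha_k}{2}\|x_k-w(f'(y_{k-1}))\|^2 + \frac{\alpha_k}{2\mu}\|\Delta_k\|_*^2.\nn
\eeq
The first term here matches and cancels the remaining strong-convexity term $-\tfrac{\mu\alpha_k}{2}\|w(f'(y_{k-1}))-x_k\|^2$, so what survives is the clean recursion $\Psi(y_k) \le \Psi(y_{k-1})-\alpha_k g_k + \tfrac{\alpha_k}{2\mu}\|\Delta_k\|_*^2$. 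Taking expectation, invoking the variance bound \eqnok{var_red} to replace $\bbe\|\Delta_k\|_*^2$ by $\sigma^2/b_k$, summing over $k=1,\dots,N$, telescoping, and using $\Psi(y_N)\ge \Psi^*$ yields $\sum_{k=1}^N \alpha_k \bbe[g_k] \le \Psi(x_0)-\Psi^*+\tfrac{\sigma^2}{2\mu}\sum_{k=1}^N \alpha_k/b_k$. Normalizing by $\sum_{k=1}^N\alpha_k$ and recognizing the PMF \eqnok{def_prob_gk} then gives \eqnok{main_nocvx_st}.

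For the convex case I would subtract $\Psi^*$ from the clean recursion and use the gap bound \eqnok{gk_cvxbnd}, i.e. $g_k \ge \Psi(y_{k-1})-\Psi^*$, to convert half of $-\alpha_k g_k$ into $-\tfrac{\alpha_k}{2}[\Psi(y_{k-1})-\Psi^*]$, exactly as in \eqnok{prof_hold_cvx_gk}. This produces $\Psi(y_k)-\Psi^* \le (1-\tfrac{\alpha_k}{2})[\Psi(y_{k-1})-\Psi^*]-\tfrac{\alpha_k}{2}g_k + \tfrac{\alpha_k}{2\mu}\|\Delta_k\|_*^2$. Dividing by $A_k$ from \eqnok{def_Alpha}, telescoping, taking expectation, dropping the nonnegative terminal term $\tfrac{\Psi(y_N)-\Psi^*}{A_N}$, and normalizing with the PMF \eqnok{def_prob_cvx_gk} gives \eqnok{main_cvx_grad_st-0}; the leftmost inequality $\bbe[\Psi(y_{R-1})-\Psi^*]\le \bbe[g_R]$ is again a direct consequence of \eqnok{gk_cvxbnd}.

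The only delicate point — rather than a genuine obstacle — is verifying that condition \eqnok{alpha_cond_smooth} is precisely the threshold making both cancellations go through simultaneously. Away from $\nu=1$ the smoothness term scales as $\alpha_k^{1+\nu}$ and cannot be absorbed cleanly into the $\alpha_k\|x_k-y_{k-1}\|^2$ strong-convexity term, which is exactly why Theorem~\ref{theo_SCGT}.b) retains a residual $\alpha_k^{1+2\nu}$ term and yields a worse rate. Once the cancellation is secured, everything reduces to the same telescoping-and-expectation bookkeeping already carried out in Theorem~\ref{theo_SCGT}.
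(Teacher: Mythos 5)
Your proposal is correct and follows essentially the same route as the paper: specialize \eqnok{proof_cv_st_gk0} to $\nu=1$, bound the cross-term via \eqnok{hold_ineq_delta} with $m_3=\mu$, $m_4=1/2$, and use \eqnok{alpha_cond_smooth} to make the coefficient $\mu-(\mu+L_1)\alpha_k$ of $\|x_k-y_{k-1}\|^2$ nonnegative so that term can be dropped, after which the telescoping and PMF normalization are identical to Theorem~\ref{theo_SCGT}. The only cosmetic imprecision is calling the absorption of the $L_1\alpha_k^2$ term an exact cancellation when it is an inequality (the combined term is merely nonpositive), but this does not affect the argument.
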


\begin{proof}
By \eqnok{proof_cv_st_gk0} and \eqnok{hold_ineq_delta} (with $m_3=\mu$, $m_4 =1/2$), we have
\beq
\Psi(y_k) \le \Psi(y_{k-1})- \alpha_k g_k -\frac{\alpha_k[\mu -(\mu+L_1) \alpha_k]}{2} \|x_k-y_{k-1}\|^2 + \frac{\alpha_k \|\Delta_k\|_*^2}{2\mu} \le \Psi(y_{k-1})- \alpha_k g_k + \frac{\alpha_k \|\Delta_k\|_*^2}{2\mu},\nn
\eeq
where the second inequality follows from \eqnok{alpha_cond_smooth}.
Rest of the proof is similar to that of Theorem~\ref{theo_SCGT}.a) and hence we skip the details.

\end{proof}

\vgap

In the next result, we specify the appropriate choice of stepsizes for Algorithm~\ref{alg_SCGT} when $f$ is smooth.

\begin{corollary}
Suppose that the sequence $\{x_k,y_k\}_{k \ge 0}$ is generated by Algorithm~\ref{alg_SCGT}, $h$ is strongly convex with $\mu>0$, and $f$ has Lipschitz continuous gradient. If batch sizes are set to \eqnok{cond_alpha_pf_st2}, stepsizes, and the iteration limit are set to
\beq \label{alpha_cond_smooth4}
\alpha_k = \bar \alpha= \frac{\mu}{L_1+\mu}, \ \ \forall k \ge 1, \ \ N_4(\epsilon) =\left \lceil\frac{4 (\mu+L_1)[\Psi(x_0)- \Psi^*]}{3\mu \epsilon} \right \rceil
\eeq
for a given $\epsilon>0$, then the total required number of calls to the stochastic oracle to have $\bbe[g_R] \le \epsilon$, where $R$ is uniformly distributed over $\{1,2,\ldots, N_4(\epsilon)\}$, is bounded by
\beq \label{nocvx_bnd_st}
b_\epsilon N_4(\epsilon)  = {\cal O} (1) \left(\frac{(\mu+L_1)\sigma^2 [\Psi(x_0)- \Psi^*]}{\mu^2 \epsilon^2} \right).
\eeq
If, in addition, $f$ is convex, $P_R(\cdot)$ is given by \eqnok{def_prob_cvx_gk}, and the iteration limit is set to
\beq \label{def_N2}
N_5(\epsilon) =\left \lceil\frac{2(\mu+L_1)}{\mu} \log \left(\frac{4(\mu+L_1)[\Psi(x_0)- \Psi^*]}{\mu \epsilon} \right) \right \rceil,
\eeq
then the complexity bound in \eqnok{nocvx_bnd_st} is improved to
\beq \label{cvx_bnd_st}
b_\epsilon N_5(\epsilon) = {\cal O} (1) \left(\frac{(\mu+L_1)\sigma^2}{\mu^2 \epsilon} \log \left[\frac{(\mu+L_1)[\Psi(x_0)- \Psi(x^*)]}{\mu \epsilon} \right] \right).
\eeq
Moreover, the above bound guarantees $\bbe \left[\Psi(y_{R-1})-\Psi(x^*)\right] \le \epsilon$.

\end{corollary}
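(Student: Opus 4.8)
The plan is to derive everything by substituting the constant stepsize and batch-size choices into the two master inequalities \eqnok{main_nocvx_st} and \eqnok{main_cvx_grad_st-0} already established in the preceding theorem, and then to check that the prescribed iteration limits drive each right-hand side below $\epsilon$. The unifying observation is that both $\alpha_k = \bar\alpha = \mu/(L_1+\mu)$ and $b_k = b_\epsilon$ are constant in $k$, so the weighted sums appearing in those bounds collapse to elementary expressions, after which the argument reduces to matching a deterministic (bias) term against a stochastic (variance) term.

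For part~a), since $\alpha_k$ is constant, \eqnok{def_prob_gk} makes $R$ uniform on $\{1,\dots,N_4(\epsilon)\}$, and the sums in \eqnok{main_nocvx_st} become $\sum_k \alpha_k = N\bar\alpha$ and $\sum_k \alpha_k/b_k = N\bar\alpha/b_\epsilon$, so that \eqnok{main_nocvx_st} reduces to $\bbe[g_R] \le (\Psi(x_0)-\Psi^*)/(N\bar\alpha) + \sigma^2/(2\mu b_\epsilon)$. I would then bound the two terms separately: using $1/\bar\alpha=(\mu+L_1)/\mu$ together with the choice of $N_4(\epsilon)$ in \eqnok{alpha_cond_smooth4} makes the first term at most $3\epsilon/4$, while $b_\epsilon \ge 2\sigma^2/(\mu\epsilon)$ from \eqnok{cond_alpha_pf_st2} makes the second at most $\epsilon/4$, giving $\bbe[g_R]\le\epsilon$. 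The total oracle-call count is $\sum_{k=1}^{N_4(\epsilon)} b_k = b_\epsilon N_4(\epsilon)$, which simplifies directly to \eqnok{nocvx_bnd_st}.

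For part~b) the new ingredient is that a constant stepsize produces geometric decay $A_k=(1-\bar\alpha/2)^k$ by \eqnok{def_Alpha}. I would first evaluate the two weighted sums in \eqnok{main_cvx_grad_st-0} in closed form via the geometric-series identity $\sum_{k=1}^N \alpha_k/A_k = 2(A_N^{-1}-1)$ (already used in the proof of Corollary~\ref{corol_CGT}), so that \eqnok{main_cvx_grad_st-0} collapses to $\bbe[g_R] \le (\Psi(x_0)-\Psi^*)/(A_N^{-1}-1) + \sigma^2/(\mu b_\epsilon)$. To control the first (bias) term I would use $(1-\bar\alpha/2)^{-1}\ge e^{\bar\alpha/2}$ to get $A_N^{-1}\ge e^{\bar\alpha N/2}$; since $N_5(\epsilon)$ in \eqnok{def_N2} is exactly $\lceil (2/\bar\alpha)\log(\cdot)\rceil$ (again because $1/\bar\alpha=(\mu+L_1)/\mu$), this forces $A_N^{-1}\ge 4(\mu+L_1)[\Psi(x_0)-\Psi^*]/(\mu\epsilon)$, after which $A_N^{-1}-1\ge \tfrac12 A_N^{-1}$ makes the first term at most $\epsilon/2$. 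The second term is again at most $\epsilon/2$ by the batch-size choice, so $\bbe[g_R]\le\epsilon$, and $\bbe[\Psi(y_{R-1})-\Psi(x^*)]\le\bbe[g_R]$ follows from the left inequality in \eqnok{main_cvx_grad_st-0}. The oracle count $b_\epsilon N_5(\epsilon)$ then yields \eqnok{cvx_bnd_st}.

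The main obstacle is the convex case: one must turn the constant-stepsize geometric decay into an exponential lower bound on $A_N^{-1}$ and match it to the logarithmic $N_5(\epsilon)$, so that the deterministic term contracts linearly (producing the $\log(1/\epsilon)$ factor) while the variance term is suppressed \emph{purely} through the batch size $b_\epsilon$ rather than through $N$. The only delicate point beyond routine substitution is verifying $A_N^{-1}-1\ge\tfrac12 A_N^{-1}$ so the subtracted $1$ is harmless, which holds once $N$ is large enough that $A_N^{-1}\ge 2$; for such small $\epsilon$ this is automatic from the chosen $N_5(\epsilon)$.
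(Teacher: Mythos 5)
Your proposal is correct and follows essentially the same route as the paper, whose proof is just the one-line remark that the results follow as in Corollary~\ref{corol_SCGT} by substituting the constant stepsize \eqnok{alpha_cond_smooth4} into \eqnok{main_nocvx_st} and \eqnok{main_cvx_grad_st-0} and invoking the sum identities of \eqnok{def_Alpha2}. You have simply carried out in detail the substitutions and the bias/variance splitting that the paper leaves implicit, including the harmless technical check that $A_N^{-1}\ge 2$.
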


\begin{proof}
The results are followed similarly to that of Corollary~\ref{corol_SCGT} by noting \eqnok{alpha_cond_smooth4} and \eqnok{def_Alpha2}.
\end{proof}

}

\vgap

Note that, the bound in \eqnok{nocvx_bnd_st} is better than the one in \eqnok{nocvx_bnd_st2} when $\nu=1$ and after disregarding $\mu$, it is the best known complexity of first-order methods when applied to smooth nonconvex stochastic programming (see e.g., \cite{GhaLan12-2,GhaLanZhang14,GhaLan15}). Moreover, when $f$ is convex, the complexity bound in \eqnok{cvx_bnd_st} is also better than the one in \eqnok{cvx_bnd_st3} and it is indeed optimal up to a logarithmic constant for solving smooth strongly convex stochastic optimization \cite{nemyud:83}.

\vgap

It should be also pointed out that while all of the complexity bounds in this section are presented for expectation results, we can simply use Markov's inequality to provide similar bounds to have $\Prob \left\{g_k \ge \epsilon\right\} \le \Lambda$ in a single run of these algorithms. In particular, these large-deviation bounds are obtained with replacing $\epsilon$ by $\epsilon \Lambda$ in their associated expectation complexity bounds. However, we can improve dependence of these bounds on $\Lambda$ by designing a two-phase variant of the RSCGT method which consists of several runs of the RSCGT method in the first phase and choosing the best solution among these runs as the output in the second phase (see e.g., \cite{GhaLan12-2,GhaLanZha15} for similar two-phase variants of randomized stochastic (projected) gradient methods for smooth problems in more details).

\section{Concluding remarks} \label{conl_rem}
In this paper, we present conditional gradient type methods for solving a class of composite nonlinear optimization problems defined in \eqnok{NLP}. We present a unified analysis for both nonconvex and (strongly) convex problems in the sense that our proposed method achieves the best-known complexity for nonconvex problems, and its rate of convergence is (nearly) optimal if $f$ is convex. We also present a few variants of this method which do not require the knowledge of problem parameters. Since there is no stepsize involved in the subproblems of these methods, their computational efforts are much cheaper per iteration than those of the existing parameter free proximal type methods. Moreover, we consider problem \eqnok{NLP} under stochastic setting where only noisy information of $f$ is available and provide complexity of our generalized methods to this setting. To the best of our knowledge, this is the first time that such complexity results of CGT methods are presented for stochastic weakly smooth optimization.

{\color{black}
{\bf Acknowledgement:}
The author is very grateful to the associate editor and the anonymous referees
for their valuable comments for improving the quality and exposition
of the paper.
}

\newcommand{\noopsort}[1]{} \newcommand{\printfirst}[2]{#1}
  \newcommand{\singleletter}[1]{#1} \newcommand{\switchargs}[2]{#2#1}

\end{document}